\def\[{\begin{equation}}
\def\]{\end{equation}}
\numberwithin{equation}{section}
\begin{document}
\graphicspath{{./PIC/}}

\title{A nonnegativity preserving algorithm for multilinear systems with nonsingular ${\mathcal M}$-tensors}

\titlerunning{A nonnegativity preserving algorithm for multilinear systems}     

\author{Xueli Bai \and Hongjin He \and Chen Ling \and Guanglu Zhou }


\institute{X. Bai \at
School of Mathematics, Tianjin University, Tianjin, 300350, China.\\
\email{baixueli@tju.edu.cn}
\and H. He (\Letter) \and C. Ling \at
Department of Mathematics, Hangzhou Dianzi University, Hangzhou, 310018, China.\\
\email{hehjmath@hdu.edu.cn}
\and C. Ling \at
\email{macling@hdu.edu.cn}
\and G. Zhou \at
Department of Mathematics and Statistics, Curtin University, Perth, WA, Australia.\\
\email{g.zhou@curtin.edu.au}
}

\date{Received: date / Accepted: date}

\maketitle

\begin{abstract}
This paper addresses multilinear systems of equations which arise in various applications such as data mining and numerical partial differential equations. When the multilinear system under consideration involves a nonsingular $\mathcal{M}$-tensor and a nonnegative right-hand side vector, it may have multiple nonnegative solutions. In this paper, we propose an algorithm which can always preserve the nonnegativity of solutions. Theoretically, we show that the sequence generated by the proposed algorithm is a nonnegative decreasing sequence and converges to a nonnegative solution of the system. Numerical results further support the novelty of the proposed method. Particularly, when some elements of the right-hand side vector are zeros, the proposed algorithm works well while existing state-of-the-art solvers may not produce a nonnegative solution.

\keywords{Multilinear systems \and  Nonsingular $\mathcal{M}$-tensor \and Nonnegative solution \and Newton-type method.}


\subclass{15A18 \and 90C30\and 90C33}

\end{abstract}

\section{Introduction}\label{Int}

Let $\mathbb{R}$ be the real field. A multidimensional array consisting of $n^m$ entries is called a real $m$-th order $n$-dimensional square tensor if we define it by
\begin{equation*}\label{defA}
{\cal A} = (a_{i_1 i_2 ... i_m}),\quad a_{i_1 i_2 \ldots i_m} \in \mathbb{R},
\quad 1\le i_1, i_2,\ldots, i_m \le n.
\end{equation*}
Throughout this paper, we suppose $m > 2$.
In what follows, we denote the set of all real tensors of order $m$ and dimension $n$ by
$\mathbb{T}_{m,n}$. Let $[n]:=\{1,2,\cdots,n\}$.
For a tensor ${\cal A}=(a_{i_1i_2\cdots i_m})\in\mathbb{T}_{m,n}$ and a vector $x=(x_1,x_2,\cdots,x_n)^\top \in\mathbb{R}^n$, we define ${\cal A}x^{m-1}\in \mathbb{R}^n$, whose $i$-th element is given by
\begin{equation}\label{tv1}
\left({\cal A}x^{m-1}\right)_i:=\sum^n_{i_2,\cdots,i_m=1}a_{ii_2\cdots i_m}x_{i_2}\cdots x_{i_m},~~\forall i\in [n],
\end{equation}
and ${\cal A}x^{m-2}\in \mathbb{R}^{n\times n}$, whose $(i,j)$-th element is given by
\begin{equation}\label{tv2}
\left({\cal A}x^{m-2}\right)_{ij}:=\sum^n_{i_3,\cdots,i_m=1}a_{iji_3\cdots i_m}x_{i_3}\cdots x_{i_m},~~\forall i,j\in [n].
\end{equation}

As defined independently in Qi \cite{Qi2005} and Lim \cite{Lim05}, we call $\lambda\in\mathbb{R}$ an eigenvalue and $x\in\mathbb{R}^n\setminus\{0\}$ the corresponding eigenvector of ${\cal A}$ if they satisfy the following equality:
\begin{equation*}\label{ep}
{\cal A}x^{m-1}=\lambda x^{[m-1]},
\end{equation*}
where $x^{[m-1]}\in\mathbb{R}^n$ is given by
$(x^{[m-1]})_i:=(x_i)^{m-1}, ~i=1,2,\cdots,n.$
The spectral radius $\rho({\cal A})$ of ${\cal A}$ is the maximum modulus of its eigenvalues, which is given by
\begin{equation*}\label{sr}
\rho({\cal A})=\max\{|\lambda|:\lambda\mbox{ is an eigenvalue of }{\cal A}\}.
\end{equation*}
Here, we refer the reader to the recent monograph \cite{QL2017} for more details of spectral theory on tensors.
Below, we recall the definition of ${\mathcal M}$-tensor.
\begin{definition}[\cite{DQW13,ZQZ14}]\label{ZMsMfun}
A tensor ${\cal A}\in\mathbb{T}_{m,n}$ is called
\begin{description}
\item{(i)}
a ${\mathcal Z}$-tensor if all of its off-diagonal entries are non-positive;
\item{(ii)}
an ${\mathcal M}$-tensor if it can be written as ${\cal A}=s{\cal I}-{\cal B}$, where ${\cal B}\geq0$ and $s\geq\rho({\cal B})$;
\item{(iii)}
a nonsingular ${\mathcal M}$-tensor if it is an ${\mathcal M}$-tensor with $s>\rho({\cal B})$.
\end{description}
\end{definition}

With the above preparations, the so-called multilinear system (a.k.a., tensor equations) refers to the task of finding a vector $x\in{\mathbb R}^n$ such that
\begin{equation} \label{mle}
{\cal A}x^{m-1}=b,
\end{equation}
where ${\cal A}\in\mathbb{T}_{m,n}$ and $b\in\mathbb{R}^n$. It has been verified that multilinear systems have many applications in data mining and numerical partial  differential equations, e.g.,
see \cite{BLNT13,DW16,LXX17,LN15,XJW18} for applications of this topic. Recently, many results in both theory and algorithm for \eqref{mle} have been developed in the literature, e.g., \cite{BaG13,BoVDDD18,DW16,Han17,HLQZ18,LXX17,LLV18b,LDG19,LZZ18,LLV18a,LM18,WCW19,XJW18}. In particular, some works are mainly contributed to a special case of \eqref{mle} where the coefficient tensor is an ${\mathcal M}$-tensor  \cite{ZQZ14} due to its widespread applications and promising properties (see \cite{DQW13,ZQZ14}). For example, Ding and Wei \cite{DW16} first showed that the multilinear system has a unique solution when the coefficient tensor ${\mathcal A}$ of \eqref{mle} is a nonsingular ${\mathcal M}$-tensor and $b$ is a positive vector. To find a solution to the underlying system \eqref{mle} with ${\mathcal M}$-tensors, some state-of-the-art algorithms, including the Jacobi, Gauss-Seidel, Newton methods \cite{DW16}, homotopy method (denoted by `HM') \cite{Han17}, Newton-Gauss-Seidel method \cite{LXX17} and tensor methods \cite{XJW18} for symmetric ${\mathcal M}$-tensors, tensor splitting methods \cite{LLV18b,LLV18a}, and the locally and quadratically convergent Newton-type algorithm (denoted by `QCA') for asymmetric tensors \cite{HLQZ18}, are proposed. Besides, there are some recent papers devoted to \eqref{mle} with other structured tensors, e.g., see \cite{LDG19,LZZ18,LM18}, and extended models of \eqref{mle}, e.g., \cite{BaG13,BoVDDD18,DZCQ18,LYHQ18,YLLH18}.

Indeed, most of the papers mentioned above paid attention to the case where \eqref{mle} has a positive vector $b$, and some algorithms are developed under the assumption that the coefficient tensor ${\mathcal A}$ is symmetric. However, some real-world problems may not possess such positivity on $b$ and symmetry property on ${\mathcal A}$, thereby possibly limiting the applicability of some algorithms. Naturally, the right-hand side vector $b$ being nonnegative (not necessarily positive) may be more general in some real-world problems or more useful for promoting a sparse solution (e.g., \cite{LN15,LQX15}) than the fully positive case. When $b$ is nonnegative, a good news from \cite{GLQX15} is that the system \eqref{mle} with a nonsingular ${\mathcal M}$-tensor has a nonnegative solution, but its solution may not be unique. Below, we take an example from \cite{GLQX15} to illustrate the nonuniqueness of solutions if $b$ is nonnegative but not positive.
\begin{example}\label{exam1}
Let ${\cal A}=(a_{i_1 i_2 i_3 i_4})\in \mathbb{T}_{4,2}$, where $a_{1111}=1$, $a_{2222}=1$, $a_{1112}=-2$ and all other $a_{i_{1}i_{2}i_{3}i_{4}}=0$. It has been proved in \cite{GLQX15} that ${\cal A}$ is a nonsingular $\mathcal{M}$-tensor. By the definition given in \eqref{tv1}, we immediately have
\begin{equation*}
{\cal A}x^3=\left(\begin{array}{c}
x_{1}^3-2x_1^2x_2 \\ x_{2}^3
\end{array}\right).
\end{equation*}
Let $b=(0,1)^\top \geq0$, then it is easy to see that both $x^*=(0,1)^\top $ and $x^\star=(2,1)^\top $ are  solutions of the system ${\cal A}x^{3}=b$.
\end{example}

Actually, we observe that a common feature of the numerical experiments presented in most of the existing tensor equation papers is that they only consider the case where $b$ is a fully positive vector. Therefore, a natural question is that do these algorithms still work for the case where $b$ is a nonnegative but not positive vector? If not, can we design some algorithms to handle such a case?

Taking the aforementioned questions, in this paper, we are interested in the multilinear system \eqref{mle} with a nonsingular (but not necessarily symmetric) ${\mathcal M}$-tensor ${\mathcal A}$ and a nonnegative (possibly with many zero components) vector $b$. Although it has been proved theoretically that such a system has one nonnegative solution (possibly not unique), there is an algorithmic gap. To our knowledge, it seems that no  algorithm is designed for the system \eqref{mle} with a nonnegative $b$. Therefore, we aim at introducing an efficient algorithm to solve \eqref{mle} with a nonsingular ${\mathcal M}$-tensor ${\mathcal A}$ and a nonnegative vector $b$, thereby filling the gap from algorithmic perspective. It is noteworthy that the proposed algorithm is well-defined in the sense that its iterative sequence is a nonnegative decreasing sequence and converges to a nonnegative solution of the system. Numerical experiments tell us that the proposed algorithm is efficient and can successfully find a nonnegative solution as long as the problem under consideration has a nonnegative solution and an appropriate starting point is taken. However, the state-of-the-art solvers, e.g., HM and QCA, tailored for \eqref{mle} with a positive vector $b$ may not produce a desired nonnegative solution in some situations. In addition, just after this article has been completed,
Li, Guan and Wang \cite{LGW19} proposed an algorithm for finding  a nonnegative solution of the multilinear system \eqref{mle}. It is shown that an increasing sequence is generated by the algorithm in  \cite{LGW19} and it converges to a nonnegative solution of the multilinear system \eqref{mle}, while our proposed algorithm produces a decreasing sequence.

The rest of this paper is organized as follows. In Section \ref{s2}, we briefly review some basic definitions and properties about structured tensors.  In Section \ref{s3}, we present a nonnegativity preserving algorithm for solving (\ref{mle}), and analyze the convergence of the proposed algorithm. In Section \ref{s4}, we report our numerical results to show the efficiency and novelty of the proposed algorithm.  Finally, we conclude the paper with some remarks in Section \ref{s5}.

We conclude this section with some notation and terminology. Throughout this paper, we use lowercases $x,y,z,\cdots$ for vectors, capital letters $A,B,C,\cdots$ for matrices and calligraphic letters ${\cal A}, {\cal B}, {\cal C},\cdots$ for tensors. We denote  $\mathbb{R}^n:=\{x=(x_1,x_2,\cdots,x_n)^\top :x_i\in \mathbb{R},\forall i \in [n]\}$,  $\mathbb{R}^n_+:=\{x\in\mathbb{R}^n:x\geq0\}$, where $x\geq0$ denotes $x_i\geq0$ for any $i\in[n]$, and $\mathbb{R}^n_{++}:=\{x\in\mathbb{R}^n:x>0\}$ where $x>0$ means $x_i>0$ for any $i\in[n]$. Suppose that $\theta$ is a subset of $[n]$, then $x_\theta\in\mathbb{R}^{|\theta|}$ represents the corresponding sub-vector of $x\in\mathbb{R}^n$, where $|\theta|$ denotes the cardinality of the set $\theta$, and $A_{\theta\theta}\in\mathbb{R}^{|\theta|\times |\theta|}$ represents the corresponding principal sub-matrix of $A\in\mathbb{R}^{n\times n}$. Besides, ${\cal I}=(\delta_{i_1\cdots i_m})\in {\mathbb T}_{m,n}$ denotes the identity tensor, where $\delta_{i_1\cdots i_m}$ is the Kronecker symbol
\begin{equation*}
\delta_{i_1\cdots i_m}=\left\{
\begin{array}{ll}
1,&\;\;{\rm if~}i_1=\cdots =i_m,\\
0,&\;\;{\rm otherwise},
\end{array}
\right.\quad 1\leq i_1,\cdots,i_m\leq n,\end{equation*}
and ${\cal A}\geq0$ denotes a nonnegative tensor, which means that all of its entries are nonnegative.
For a
continuously differentiable function $F:\mathbb{R}^n \to \mathbb{R}^n$, we denote
the Jacobian of $F$ at $x\in \mathbb{R}^n$ by $F^{\prime}(x)$.

\section{Preliminaries }\label{s2}
In this section, we briefly recall some definitions and properties about structured matrices and tensors, which will be used throughout this paper.

A tensor ${\cal A}\in\mathbb{T}_{m,n}$ is called a symmetric tensor if all its elements are invariant under arbitrary permutation of their indices, and it is called a semi-symmetric tensor with respect to the indices $\{i_2,\cdots,i_m\}$ if for any index $i\in[n]$, the $(m-1)$-order $n$-dimensional tensor ${\cal A}_i:=(a_{ii_2\cdots i_m})_{1\leq i_2,\cdots,i_m\leq n}$ is symmetric. Thus, from \cite{Han17} we know that, for any tensor ${\cal A}=(a_{i_1i_2\cdots i_m})$, there always exists a semi-symmetric tensor $\bar{{\cal A}}=(\bar{a}_{i_1i_2\cdots i_m})$, denoted by
\begin{equation}\label{ss}
\bar{a}_{i_1i_2\cdots i_m} = \frac{1}{(m-1)!}\sum_{\pi}a_{i_1\pi(i_2\cdots i_m)},
\end{equation}
such that ${\cal A}x^{m-1}=\bar{{\cal A}}x^{m-1}$  and $({\cal A}x^{m-1})'=(m-1)\bar{{\cal A}}x^{m-2}$ for any $x\in\mathbb{R}^n$, where the sum is over all the permutations $\pi(i_2\cdots i_m)$.

From Definition \ref{ZMsMfun}, we know that (nonsingular) ${\mathcal M}$-tensor is a generalization of (nonsingular) $M$-matrix. Below, we recall some properties of nonsingular $M$-matrices, which will be used in the later analysis.

\begin{theorem}[\cite{BP94}]\label{Mm1}
Let $A\in\mathbb{R}^{n\times n}$ be a $Z$-matrix, then the following statements are equivalent:
\begin{description}
\item{\rm(i)} $A$ is a nonsingular $M$-matrix;
\item{\rm(ii)} There exists an $x\in\mathbb{R}^n_{++}$ satisfying $Ax\in\mathbb{R}^n_{++}$;
\item{\rm(iii)} $A^{-1}$ exists and $A^{-1}$ is a nonnegative matrix.
\end{description}
\end{theorem}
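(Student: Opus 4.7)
The plan is to close the cycle (i) $\Rightarrow$ (iii) $\Rightarrow$ (ii) $\Rightarrow$ (i). All three implications use that a $Z$-matrix admits a decomposition $A=sI-B$ with $B\geq 0$ (just pick $s\geq\max_i a_{ii}$), and the interplay between the spectral radius of $B$ and test vectors $x>0$.

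First I would handle (i) $\Rightarrow$ (iii). Writing $A=sI-B$ with $B\geq 0$ and $s>\rho(B)$, the scaled matrix $B/s$ has spectral radius strictly less than one, so the Neumann series
\[
A^{-1}=\frac{1}{s}\sum_{k=0}^{\infty}\bigl(B/s\bigr)^{k}
\]
converges. Each term in the series is entrywise nonnegative, hence the limit $A^{-1}$ exists and is a nonnegative matrix.

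Next, (iii) $\Rightarrow$ (ii) is almost immediate, but one has to be a little careful about strict positivity. Since $A^{-1}$ is nonsingular, no row of $A^{-1}$ can be identically zero, so each row contains at least one strictly positive entry. Picking any $y\in\mathbb{R}^{n}_{++}$ (say $y=(1,\ldots,1)^{\top}$) and setting $x:=A^{-1}y$, every component $x_{i}=\sum_{j}(A^{-1})_{ij}y_{j}$ is a sum of nonnegative terms containing at least one strictly positive summand; hence $x\in\mathbb{R}^{n}_{++}$ and $Ax=y\in\mathbb{R}^{n}_{++}$, as required.

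The main obstacle is (ii) $\Rightarrow$ (i); this is the only direction that is not purely formal, and it relies on the Perron--Frobenius-type bound for nonnegative matrices. Starting from a $Z$-matrix $A$, I would choose $s\geq\max_{i}a_{ii}$ so that $B:=sI-A\geq 0$ (the off-diagonal entries of $B$ coincide with the negatives of those of $A$, and they are $\geq 0$ by the $Z$-property, while the diagonal $s-a_{ii}$ is $\geq 0$ by the choice of $s$). The hypothesis $Ax>0$ with $x>0$ rewrites as $Bx<sx$ componentwise, i.e.\
\[
\frac{(Bx)_{i}}{x_{i}}<s\qquad\text{for every } i\in[n].
\]
The classical Collatz--Wielandt bound for a nonnegative matrix with a strictly positive test vector gives
\[
\rho(B)\leq\max_{i\in[n]}\frac{(Bx)_{i}}{x_{i}}<s,
\]
so $A=sI-B$ satisfies $s>\rho(B)$ and is therefore a nonsingular $\mathcal{M}$-matrix in the sense of Definition \ref{ZMsMfun}. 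The key technical point I expect to have to justify carefully is precisely this Collatz--Wielandt style inequality for general (possibly reducible) $B\geq 0$; it can be established by perturbing $B$ to $B_{\varepsilon}:=B+\varepsilon\mathbf{1}\mathbf{1}^{\top}$ to obtain an irreducible nonnegative matrix, applying the strict Perron--Frobenius theorem to $B_{\varepsilon}$, and then letting $\varepsilon\downarrow 0$ using continuity of the spectral radius.
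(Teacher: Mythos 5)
Your proof is correct. Note that the paper itself offers no proof of this statement: it is quoted verbatim from Berman and Plemmons \cite{BP94} as a classical fact, so there is nothing to compare against except the standard literature, and your cycle (i) $\Rightarrow$ (iii) $\Rightarrow$ (ii) $\Rightarrow$ (i) via the Neumann series and the Collatz--Wielandt bound is exactly the textbook route. The only step you flag as delicate---the inequality $\rho(B)\leq\max_{i}(Bx)_{i}/x_{i}$ for a general, possibly reducible $B\geq 0$ and $x>0$---can be settled more cheaply than by your perturbation-and-limit argument: set $D=\mathrm{diag}(x)$ and observe that $D^{-1}BD$ is nonnegative with $i$-th row sum equal to $(Bx)_{i}/x_{i}$, so that
\[
\rho(B)=\rho\left(D^{-1}BD\right)\leq\left\|D^{-1}BD\right\|_{\infty}=\max_{i\in[n]}\frac{(Bx)_{i}}{x_{i}}<s,
\]
with no appeal to irreducibility or to continuity of the spectral radius. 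Your perturbation argument also works (continuity of eigenvalues gives $\rho(B_{\varepsilon})\to\rho(B)$, and the strict inequality survives the limit because the right-hand side $\max_{i}(Bx)_{i}/x_{i}$ is itself strictly below $s$), but the diagonal-similarity trick removes the one technical debt you acknowledge.
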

Similarly, some properties of nonsingular ${\mathcal M}$-tensors are shown below.
\begin{theorem}[\cite{DQW13}]\label{nmt}
Let ${\cal A}\in\mathbb{T}_{m,n}$ be a ${\cal Z}$-tensor, then the following statements are equivalent:
\begin{description}
\item{\rm(i)} ${\cal A}$ is a nonsingular ${\mathcal M}$-tensor;
\item{\rm(ii)} There exists an $x\in\mathbb{R}^n_{++}$ satisfying ${\cal A}x^{m-1}\in\mathbb{R}^n_{++}$;
\item{\rm(iii)} All diagonal entries of ${\cal A}$ are positive and there exists a positive diagonal matrix $D\in\mathbb{R}^{n\times n}$ such that ${\cal A}D^{m-1}$ is strictly diagonally dominated, where
    \begin{equation*}\label{d3}
    {\cal A}D^{m-1}={\cal A}\times_2D\times_3\cdots\times_mD,
    \end{equation*}
    and
    \begin{equation*}\label{d4}
    \left({\cal A}\times_k D\right)_{i_1\cdots i_{k-1}j_ki_{k+1}\cdots i_m}=\sum^n_{i_k=1}a_{i_1\cdots i_m}x_{i_kj_k},~~\forall k\in[m]
    \end{equation*}
    with $x_{i_kj_k}=D_{i_kj_k}$.
\end{description}
\end{theorem}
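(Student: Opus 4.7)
The plan is to establish the cyclic chain (i) $\Rightarrow$ (ii) $\Rightarrow$ (iii) $\Rightarrow$ (i), which parallels the matrix analogue in Theorem \ref{Mm1} but replaces classical linear algebra with the Perron--Frobenius theory of nonnegative tensors. The workhorse throughout is the Collatz--Wielandt identity
\begin{equation*}
\rho(\mathcal{B}) \;=\; \inf_{y>0}\,\max_{i\in[n]}\, \frac{(\mathcal{B}y^{m-1})_i}{y_i^{m-1}},
\end{equation*}
which holds for any nonnegative tensor $\mathcal{B}$ (see, e.g., \cite{QL2017}).

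For (i) $\Rightarrow$ (ii), I would decompose $\mathcal{A}=s\mathcal{I}-\mathcal{B}$ as in Definition \ref{ZMsMfun}. Since $s>\rho(\mathcal{B})$, the identity above produces an $x\in\mathbb{R}^n_{++}$ with $\max_i (\mathcal{B}x^{m-1})_i/x_i^{m-1} < s$, i.e.\ $\mathcal{B}x^{m-1} < s\,x^{[m-1]}$ componentwise, so that $\mathcal{A}x^{m-1}>0$. For (ii) $\Rightarrow$ (iii), the positive witness $x$ furnishes the diagonal scaling $D:=\mathrm{diag}(x_1,\ldots,x_n)$. Expanding $(\mathcal{A}x^{m-1})_i>0$ and exploiting the $\mathcal{Z}$-tensor sign pattern (all off-diagonals are nonpositive) gives
\begin{equation*}
a_{ii\cdots i}\,x_i^{m-1} \;>\; \sum_{(i_2,\ldots,i_m)\neq(i,\ldots,i)} |a_{ii_2\cdots i_m}|\,x_{i_2}\cdots x_{i_m}\;\geq\;0,
\end{equation*}
which simultaneously forces every diagonal entry of $\mathcal{A}$ to be positive and, after identifying each $x_{i_k}$ with the corresponding $D$-factor in the mode products, delivers the strict diagonal dominance of $\mathcal{A}D^{m-1}$. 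For (iii) $\Rightarrow$ (i), I would set $s:=\max_i a_{ii\cdots i}>0$ and $\mathcal{B}:=s\mathcal{I}-\mathcal{A}$; the $\mathcal{Z}$-tensor hypothesis together with positivity of the diagonal ensures $\mathcal{B}\geq 0$. Applying the Collatz--Wielandt bound at $y:=\mathrm{diag}(D)$, the strict diagonal dominance of $\mathcal{A}D^{m-1}$ rewrites as $(\mathcal{B}y^{m-1})_i < s\,y_i^{m-1}$ for every $i$, whence $\rho(\mathcal{B})<s$ and $\mathcal{A}$ qualifies as a nonsingular $\mathcal{M}$-tensor.

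The main obstacle is justifying the Collatz--Wielandt identity for a possibly reducible nonnegative tensor, where the Perron eigenvector need not be strictly positive and the infimum may fail to be attained. The standard workaround is a perturbation argument: replace $\mathcal{B}$ by $\mathcal{B}+\epsilon\mathcal{E}$ with $\mathcal{E}$ the all-ones tensor, apply Perron--Frobenius to the irreducible perturbation to obtain $\rho(\mathcal{B}+\epsilon\mathcal{E})$ together with a strictly positive eigenvector $x_\epsilon$, and then pass $\epsilon\to 0^+$ using continuity of $\rho(\cdot)$. Once this tool is in hand, the remaining steps reduce to routine manipulation of the $\mathcal{Z}$-tensor sign structure.
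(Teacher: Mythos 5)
The paper offers no proof of this result: Theorem \ref{nmt} is imported verbatim from \cite{DQW13}, so there is no in-paper argument to measure your proposal against. Judged on its own terms, your cyclic chain (i)$\Rightarrow$(ii)$\Rightarrow$(iii)$\Rightarrow$(i) is correct and is essentially the standard proof, close in spirit to the original one in \cite{DQW13}. The steps (ii)$\Rightarrow$(iii) and (iii)$\Rightarrow$(i) are indeed elementary sign-pattern manipulations once one observes that, for $D=\mathrm{diag}(x)$ with $x>0$, the entries of $\mathcal{A}D^{m-1}$ are $a_{ii_2\cdots i_m}x_{i_2}\cdots x_{i_m}$, so strict diagonal dominance of $\mathcal{A}D^{m-1}$ is literally componentwise positivity of $\mathcal{A}x^{m-1}$ for a $\mathcal{Z}$-tensor with positive diagonal. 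The only substantive ingredient is the Collatz--Wielandt identity for a general (possibly reducible) nonnegative tensor, and you correctly isolate it: note that (iii)$\Rightarrow$(i) uses only the easy inequality $\rho(\mathcal{B})\le\max_{i}(\mathcal{B}y^{m-1})_i/y_i^{m-1}$, valid for every $y>0$, whereas (i)$\Rightarrow$(ii) needs the harder direction that the infimum does not exceed $\rho(\mathcal{B})$; your perturbation scheme $\mathcal{B}+\epsilon\mathcal{E}$ handles this, provided you invoke (rather than re-derive) the monotonicity of the spectral radius in the entries and the fact that $\rho(\mathcal{B}+\epsilon\mathcal{E})\to\rho(\mathcal{B})$ as $\epsilon\to 0^{+}$, both available in \cite{QL2017} and in Yang--Yang's extension of the Perron--Frobenius theorem. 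One small check worth writing out in (iii)$\Rightarrow$(i): with $s:=\max_i a_{i\cdots i}$, the tensor $\mathcal{B}=s\mathcal{I}-\mathcal{A}$ is nonnegative because its diagonal entries are $s-a_{i\cdots i}\ge 0$ and its off-diagonal entries are $-a_{i_1\cdots i_m}\ge 0$ by the $\mathcal{Z}$-tensor hypothesis. With those references supplied, the argument is complete.
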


Let ${\cal A}\in\mathbb{T}_{m,n}$ and $\bar{{\cal A}}\in\mathbb{T}_{m,n}$ be the corresponding semi-symmetric tensor of ${\cal A}$ which satisfies (\ref{ss}). Then, we can obtain the following relationship between these two tensors.
\begin{lemma}[\cite{Han17}]\label{ssn}
Let ${\cal A}\in\mathbb{T}_{m,n}$ be a nonsingular ${\mathcal M}$-tensor. Then, $\bar{{\cal A}}$ is also a nonsingular ${\mathcal M}$-tensor.
\end{lemma}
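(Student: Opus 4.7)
The plan is to apply Theorem~\ref{nmt}, specifically the equivalence (i)$\Leftrightarrow$(ii). Since $\mathcal{A}$ is a nonsingular $\mathcal{M}$-tensor, (ii) guarantees the existence of some $x\in\mathbb{R}^n_{++}$ with $\mathcal{A}x^{m-1}\in\mathbb{R}^n_{++}$. By the defining property of the semi-symmetrization recalled in the paragraph containing \eqref{ss}, we have $\bar{\mathcal{A}}x^{m-1}=\mathcal{A}x^{m-1}$, so the same $x$ witnesses $\bar{\mathcal{A}}x^{m-1}\in\mathbb{R}^n_{++}$. Thus, once we check that $\bar{\mathcal{A}}$ is a $\mathcal{Z}$-tensor, Theorem~\ref{nmt} will immediately give that $\bar{\mathcal{A}}$ is a nonsingular $\mathcal{M}$-tensor.

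The only real work, then, is verifying the sign pattern of $\bar{\mathcal{A}}$. Fix an index tuple $(i_1,i_2,\ldots,i_m)$ with not all indices equal (an off-diagonal position). From \eqref{ss},
\[
\bar{a}_{i_1 i_2 \cdots i_m} \;=\; \frac{1}{(m-1)!}\sum_{\pi} a_{i_1\,\pi(i_2\cdots i_m)}.
\]
I would argue that every summand is itself an off-diagonal entry of $\mathcal{A}$. Indeed, the multiset $\{\pi(i_2),\ldots,\pi(i_m)\}$ equals $\{i_2,\ldots,i_m\}$, so the tuple $(i_1,\pi(i_2),\ldots,\pi(i_m))$ has all entries equal if and only if $i_1=i_2=\cdots=i_m$, which is excluded. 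Because $\mathcal{A}$ is a $\mathcal{Z}$-tensor, each term is $\le 0$, and hence $\bar{a}_{i_1 i_2 \cdots i_m}\le 0$. Therefore $\bar{\mathcal{A}}$ is a $\mathcal{Z}$-tensor.

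Combining the two observations yields the result: $\bar{\mathcal{A}}$ is a $\mathcal{Z}$-tensor admitting a positive vector $x$ with $\bar{\mathcal{A}}x^{m-1}$ positive, so by Theorem~\ref{nmt} it is a nonsingular $\mathcal{M}$-tensor. I do not anticipate a serious obstacle; the only subtle point is the combinatorial check that permuting $(i_2,\ldots,i_m)$ cannot turn an off-diagonal tuple into a diagonal one, which follows because permutations preserve the underlying multiset of indices.
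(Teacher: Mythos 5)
Your proof is correct. The paper itself does not prove this lemma (it is quoted from \cite{Han17}), and your argument --- checking that semi-symmetrization preserves the $\mathcal{Z}$-tensor sign pattern because permuting $(i_2,\ldots,i_m)$ cannot turn an off-diagonal index tuple into a diagonal one, and then invoking the positive-vector characterization of Theorem~\ref{nmt} together with $\bar{\mathcal{A}}x^{m-1}=\mathcal{A}x^{m-1}$ --- is exactly the standard proof given in that reference.
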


Based on the above lemma, we can further conclude the following result which is vital for the convergence analysis of the proposed algorithm in this paper.
\begin{lemma}\label{ssjn}
Suppose that ${{\cal A}}\in\mathbb{T}_{m,n}$ is a nonsingular ${\mathcal M}$-tensor and there exists an $\hat{x}\in\mathbb{R}^n_{+}$ satisfying ${{\cal A}}\hat{x}^{m-1}\in\mathbb{R}^n_{+}$.
Let $\bar{I} = \{i: \left({\cal A}(\hat{x})^{m-1}\right)_i > 0\}$ and suppose $\bar{I} \ne \emptyset$.
Then, for any nonempty subset $I$ of $\bar{I}$,  $({{\cal A}}\hat{x}^{m-1} )^{\prime}_{II}$ is a nonsingular $M$-matrix.
Here,  $({{\cal A}}\hat{x}^{m-1} )^{\prime}_{II}$ is a principal sub-matrix of $({{\cal A}}\hat{x}^{m-1} )^{\prime}$.
\end{lemma}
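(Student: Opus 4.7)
The plan is to apply the $M$-matrix characterization in Theorem \ref{Mm1} to the principal submatrix in question. To set up, I would first rewrite the Jacobian via the semi-symmetric companion tensor: since $(\mathcal{A}x^{m-1})' = (m-1)\bar{\mathcal{A}}x^{m-2}$ for all $x$, showing $(\mathcal{A}\hat{x}^{m-1})'_{II}$ is a nonsingular $M$-matrix reduces (the factor $m-1>0$ is harmless) to showing the same for $(\bar{\mathcal{A}}\hat{x}^{m-2})_{II}$. By Lemma \ref{ssn}, $\bar{\mathcal{A}}$ is itself a nonsingular $\mathcal{M}$-tensor, hence a $\mathcal{Z}$-tensor with positive diagonal entries by Theorem \ref{nmt}.

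Next, I would verify the $Z$-structure of the submatrix. For $i,j \in I$ with $i \neq j$, the entry
\[
(\bar{\mathcal{A}}\hat{x}^{m-2})_{ij} = \sum_{i_3,\ldots,i_m} \bar{a}_{iji_3\cdots i_m}\,\hat{x}_{i_3}\cdots\hat{x}_{i_m}
\]
is a sum of non-positive terms, since each $\bar{a}_{iji_3\cdots i_m}$ with $i\neq j$ is an off-diagonal entry of the $\mathcal{Z}$-tensor $\bar{\mathcal{A}}$ and $\hat{x}\geq 0$. So the submatrix is a $Z$-matrix.

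To produce a strictly positive test vector in the sense of Theorem \ref{Mm1}(ii), I would use $\hat{x}_I$ itself. A preliminary observation is that $\hat{x}_I \in \mathbb{R}^{|I|}_{++}$: for $i \in I \subseteq \bar{I}$, isolating the pure-diagonal contribution gives
\[
0 < (\mathcal{A}\hat{x}^{m-1})_i = a_{ii\cdots i}\,\hat{x}_i^{m-1} + (\text{terms}\leq 0),
\]
and since $a_{ii\cdots i}>0$ this forces $\hat{x}_i>0$. Then, using the contraction identity $(\bar{\mathcal{A}}\hat{x}^{m-2})\hat{x} = \bar{\mathcal{A}}\hat{x}^{m-1} = \mathcal{A}\hat{x}^{m-1}$ and splitting the matrix-vector product over $I$ and $[n]\setminus I$, one obtains
\[
\bigl[(\bar{\mathcal{A}}\hat{x}^{m-2})_{II}\bigr]\hat{x}_I \;=\; (\mathcal{A}\hat{x}^{m-1})_I \;-\; \sum_{j\notin I}(\bar{\mathcal{A}}\hat{x}^{m-2})_{I,j}\,\hat{x}_j.
\]
The first term is strictly positive by the definition of $\bar{I}$, and each term in the correction is componentwise non-positive (for $i\in I$ and $j\notin I$ we have $i\neq j$, so $(\bar{\mathcal{A}}\hat{x}^{m-2})_{ij}\leq 0$ while $\hat{x}_j\geq 0$). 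Hence the left-hand side lies in $\mathbb{R}^{|I|}_{++}$, and Theorem \ref{Mm1} delivers the claim.

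The only delicate step I anticipate is establishing $\hat{x}_I > 0$, which pivots on the positivity of the diagonal entries guaranteed by Theorem \ref{nmt}; once this is in hand, the choice $z=\hat{x}_I$ is essentially forced by the contraction identity, and the sign-tracking on the ``spillover'' terms from indices outside $I$ is routine bookkeeping.
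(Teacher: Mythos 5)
Your proposal is correct and follows essentially the same route as the paper: reduce to the semi-symmetric tensor $\bar{\mathcal{A}}$ via Lemma \ref{ssn}, check the $Z$-structure of $(\bar{\mathcal{A}}\hat{x}^{m-2})_{II}$, and use $\hat{x}_I$ as the positive test vector in Theorem \ref{Mm1}(ii) via the contraction identity $[\bar{\mathcal{A}}\hat{x}^{m-2}]_{II}\cdot\hat{x}_I \geq (\mathcal{A}\hat{x}^{m-1})_I > 0$. If anything, you supply two details the paper leaves implicit — the diagonal-dominance argument showing $\hat{x}_I>0$ and the explicit sign-accounting of the dropped off-$I$ terms — so no gaps.
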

\begin{proof}
Let $\bar{{\cal A}}\in\mathbb{T}_{m,n}$ be the corresponding semi-symmetric tensor of ${\cal A}$ which satisfies (\ref{ss}). Then, ${{\cal A}}\hat{x}^{m-1} = {\bar{\cal A}}\hat{x}^{m-1}$,  $({{\cal A}}\hat{x}^{m-1} )^{\prime} = (m-1)\bar{{\cal A}}\hat{x}^{m-2}$ and
the $(i,j)$-th entry of the matrix $(m-1)\bar{{\cal A}}\hat{x}^{m-2}$ is given by
\begin{equation*}\label{jz}
\left((m-1)\bar{{\cal A}}\hat{x}^{m-2}\right)_{ij}=(m-1)\sum^n_{i_3,\cdots,i_m=1}\bar{a}_{iji_3\cdots i_m}\hat{x}_{i_3}\cdots \hat{x}_{i_m},~~\forall i,j\in [n].
\end{equation*}
By Lemma \ref{ssn},  $\bar{{\cal A}}$ is a nonsingular ${\mathcal M}$-tensor. Thus,  when $i\neq j$, $\bar{a}_{iji_3\cdots i_m}\leq0$. Hence, $(m-1)\bar{{\cal A}}\hat{x}^{m-2}$ is a $Z$-matrix.  So,
$({{\cal A}}\hat{x}^{m-1} )^{\prime}_{II} =\left [ (m-1)\bar{{\cal A}}\hat{x}^{m-2}\right ]_{II}$ is a $Z$-matrix.
Since $\left({\cal A}(\hat{x})^{m-1}\right)_i > 0$ for any $i\in I$, we have $\hat{x}_I > 0$.
Thus, we obtain
\begin{align}
\left({{\cal A}}\hat{x}^{m-1} \right)^{\prime}_{II} \cdot \hat{x}_{I}
& = (m-1) \left [ {\bar{\cal A}}\hat{x}^{m-2}\right ]_{II} \cdot \hat{x}_{I}   \nonumber \\
&\geq(m-1)\left({\bar {\cal A}}(\hat{x})^{m-1}\right)_{I} \nonumber \\
& = (m-1)\left({\cal A}(\hat{x})^{m-1}\right)_{I} \nonumber \\
&>0, \nonumber
\end{align}
where we use `$\cdot$' to represent the matrix-vector product.
Thus, it follows from Theorem \ref{Mm1}  that $({{\cal A}}\hat{x}^{m-1} )^{\prime}_{II}$ is a nonsingular $M$-matrix.
\qed\end{proof}

At the end of this section, we recall two important results on \eqref{mle}, which guarantee that the solution set of the problem under consideration is nonempty.
\begin{theorem}[\cite{DW16}]\label{nuss}
Let ${\cal A}\in\mathbb{T}_{m,n}$ be a nonsingular ${\mathcal M}$-tensor and $b\in\mathbb{R}^n_{++}$. Then, the system ${\cal A}x^{m-1}=b$ has a unique positive solution.
\end{theorem}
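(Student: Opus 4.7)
The plan is to separate existence and uniqueness and to obtain both by a monotone reformulation. Write $\mathcal{A}=s\mathcal{I}-\mathcal{B}$ with $\mathcal{B}\geq 0$ and $s>\rho(\mathcal{B})$ as in Definition \ref{ZMsMfun}(iii). Then $\mathcal{A}x^{m-1}=b$ is equivalent to $x=F(x)$, where
\[
F(x):=\Bigl(\tfrac{1}{s}\bigl(\mathcal{B}x^{m-1}+b\bigr)\Bigr)^{[1/(m-1)]}.
\]
Since $\mathcal{B}\geq 0$ and $b>0$, the map $F$ sends $\mathbb{R}^n_+$ into $\mathbb{R}^n_{++}$, is continuous, and is componentwise nondecreasing on $\mathbb{R}^n_+$.

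For existence I would run the monotone iteration $x^{(0)}=0$, $x^{(k+1)}=F(x^{(k)})$. Monotonicity of $F$ forces $\{x^{(k)}\}$ to be componentwise nondecreasing (comparing $x^{(1)}\geq x^{(0)}$ and iterating). To bound the sequence from above, I would invoke Theorem \ref{nmt}(ii) to produce $y\in\mathbb{R}^n_{++}$ with $\mathcal{A}y^{m-1}\in\mathbb{R}^n_{++}$, and then pick $\alpha>0$ large enough that $\alpha^{m-1}\mathcal{A}y^{m-1}\geq b$ componentwise; this gives $\mathcal{A}(\alpha y)^{m-1}\geq b$, equivalently $F(\alpha y)\leq \alpha y$. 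Since $x^{(0)}=0\leq \alpha y$, monotonicity of $F$ yields $x^{(k)}\leq \alpha y$ for all $k$. Thus $\{x^{(k)}\}$ converges to some $x^\ast\in\mathbb{R}^n_+$, and continuity of $F$ gives $x^\ast=F(x^\ast)$, i.e., $\mathcal{A}(x^\ast)^{m-1}=b$. Strict positivity follows from $x^\ast\geq x^{(1)}=(b/s)^{[1/(m-1)]}>0$.

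For uniqueness I would use the standard max-ratio argument. Suppose $x,y\in\mathbb{R}^n_{++}$ both satisfy $\mathcal{A}x^{m-1}=\mathcal{A}y^{m-1}=b$, and set $\alpha:=\max_{i\in[n]} y_i/x_i$, attained at some index $i_0$. Arguing by contradiction, assume $\alpha>1$. Since $y\leq \alpha x$ and $\mathcal{B}\geq 0$, every monomial in $(\mathcal{B}y^{m-1})_{i_0}$ is bounded by $\alpha^{m-1}$ times the corresponding monomial in $(\mathcal{B}x^{m-1})_{i_0}$, hence $(\mathcal{B}y^{m-1})_{i_0}\leq \alpha^{m-1}(\mathcal{B}x^{m-1})_{i_0}$. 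Equating the $i_0$-th components of $sy^{[m-1]}-\mathcal{B}y^{m-1}=sx^{[m-1]}-\mathcal{B}x^{m-1}$ and using $y_{i_0}=\alpha x_{i_0}$ yields
\[
s(\alpha^{m-1}-1)x_{i_0}^{m-1}=(\mathcal{B}y^{m-1})_{i_0}-(\mathcal{B}x^{m-1})_{i_0}\leq (\alpha^{m-1}-1)(\mathcal{B}x^{m-1})_{i_0},
\]
so $b_{i_0}=(sx^{[m-1]}-\mathcal{B}x^{m-1})_{i_0}\leq 0$, contradicting $b>0$. Hence $\alpha\leq 1$, i.e., $y\leq x$; swapping the roles of $x$ and $y$ gives $x=y$.

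The step I expect to require the most care is the uniform upper bound $x^{(k)}\leq \alpha y$ for the iteration, because it is the only place where the full strength of the nonsingular $\mathcal{M}$-tensor hypothesis (through Theorem \ref{nmt}(ii) and the strict inequality $s>\rho(\mathcal{B})$) is used; once the fixed-point framework is set up, monotonicity and the contradiction in the uniqueness step are routine.
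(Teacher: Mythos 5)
The paper states Theorem \ref{nuss} without proof, citing Ding and Wei \cite{DW16}; your argument is a correct, self-contained proof along essentially the same lines as the original, namely a monotone fixed-point iteration built from the splitting $\mathcal{A}=s\mathcal{I}-\mathcal{B}$ (with the upper bound $\alpha y$ supplied by Theorem \ref{nmt}(ii)) for existence, and a max-ratio comparison for uniqueness. Every step checks out: $F$ is well defined, continuous and monotone on $\mathbb{R}^n_+$ because $\mathcal{B}\geq 0$ and $b>0$ keep the radicand nonnegative (indeed bounded below by $b/s>0$, which also gives strict positivity of the limit), and the division by $\alpha^{m-1}-1>0$ in the uniqueness step is legitimate since $\alpha>1$ and $m>2$.
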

\begin{theorem}[\cite{GLQX15}]\label{nus}
Let ${\cal A}\in\mathbb{T}_{m,n}$ be a nonsingular ${\mathcal M}$-tensor and $b\in\mathbb{R}^n_+$. Then, the system ${\cal A}x^{m-1}=b$ has a nonnegative solution.
\end{theorem}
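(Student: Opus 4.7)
My plan is to reformulate the equation $\mathcal{A}x^{m-1}=b$ as a fixed point problem on the nonnegative orthant and then apply Brouwer's fixed point theorem. Since $\mathcal{A}$ is a nonsingular $\mathcal{M}$-tensor, Definition \ref{ZMsMfun} supplies a splitting $\mathcal{A}=s\mathcal{I}-\mathcal{B}$ with $\mathcal{B}\geq 0$ and $s>\rho(\mathcal{B})\geq 0$, so in particular $s>0$. The equation then reads $sx^{[m-1]}=\mathcal{B}x^{m-1}+b$, and on $\mathbb{R}^n_+$ it is equivalent to the fixed point relation $x=T(x)$, where
\[
T(x)_i=\left(\frac{(\mathcal{B}x^{m-1})_i+b_i}{s}\right)^{1/(m-1)},\qquad i\in[n].
\]
Because $\mathcal{B}\geq 0$, $b\geq 0$, and $s>0$, the map $T$ is well-defined, continuous on $\mathbb{R}^n_+$, and sends $\mathbb{R}^n_+$ into itself.

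The crux is to exhibit a compact convex $T$-invariant subset of $\mathbb{R}^n_+$. I choose any $\hat{b}\in\mathbb{R}^n_{++}$ with $\hat{b}\geq b$, for instance $\hat{b}_i=b_i+1$ for every $i\in[n]$. Then Theorem \ref{nuss}, applied with the strictly positive right-hand side $\hat{b}$, provides a unique positive solution $\hat{x}>0$ of $\mathcal{A}\hat{x}^{m-1}=\hat{b}$, which yields the identity $s\hat{x}^{[m-1]}=\mathcal{B}\hat{x}^{m-1}+\hat{b}$. I claim that the box $K=\{x\in\mathbb{R}^n:0\leq x\leq \hat{x}\}$ satisfies $T(K)\subseteq K$. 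Indeed, for $x\in K$, the nonnegativity of the entries of $\mathcal{B}$ together with $0\leq x_j\leq\hat{x}_j$ gives, term by term in the defining sum \eqref{tv1}, $\mathcal{B}x^{m-1}\leq\mathcal{B}\hat{x}^{m-1}$; combined with $b\leq\hat{b}$, this yields $\mathcal{B}x^{m-1}+b\leq s\hat{x}^{[m-1]}$, and taking the $(m-1)$-th root componentwise gives $T(x)\leq\hat{x}$, while $T(x)\geq 0$ is immediate. Since $K$ is compact and convex and $T:K\to K$ is continuous, Brouwer's fixed point theorem produces some $x^*\in K\subseteq\mathbb{R}^n_+$ with $T(x^*)=x^*$; unwinding the fixed point equation then gives $s(x^*)^{[m-1]}=\mathcal{B}(x^*)^{m-1}+b$, that is, $\mathcal{A}(x^*)^{m-1}=b$, which is the desired nonnegative solution.

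The main technical step is the invariance $T(K)\subseteq K$, and this is the only place where the nonsingular $\mathcal{M}$-tensor structure interacts with the nonnegativity of $b$ in an essential way: it relies on both the monotonicity of $x\mapsto\mathcal{B}x^{m-1}$ on $\mathbb{R}^n_+$ (immediate from $\mathcal{B}\geq 0$) and the availability of a strictly positive upper barrier $\hat{x}$ (exactly where Theorem \ref{nuss} for strictly positive right-hand sides is used). An alternative would be a perturbation argument, solving $\mathcal{A}x_\epsilon^{m-1}=b+\epsilon\hat{b}$ for $\epsilon>0$ and then passing $\epsilon\to 0^+$, but this would demand a separate uniform a priori bound on $\{x_\epsilon\}$ and a compactness/continuity argument at the limit, whereas the fixed point route above bundles both of those steps into a single application of Brouwer's theorem.
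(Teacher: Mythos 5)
Your proof is correct. Note that the paper does not actually prove Theorem \ref{nus}: it is stated as an imported result from \cite{GLQX15}, where it arises within the framework of $Z$-tensor complementarity problems. Your argument is therefore a genuinely different, self-contained route, and every step checks out. You use the splitting $\mathcal{A}=s\mathcal{I}-\mathcal{B}$ with $s>\rho(\mathcal{B})\geq 0$ (so $s>0$), which makes the map $T$ with components $T(x)_i=\left(s^{-1}\left((\mathcal{B}x^{m-1})_i+b_i\right)\right)^{1/(m-1)}$ well defined and continuous on $\mathbb{R}^n_+$ (continuity of $t\mapsto t^{1/(m-1)}$ at $t=0$ is all Brouwer requires, even though this map is not Lipschitz there); the equivalence of $x=T(x)$ with $\mathcal{A}x^{m-1}=b$ on $\mathbb{R}^n_+$ is legitimate because the nonnegative $(m-1)$-th root inverts $t\mapsto t^{m-1}$ on $[0,\infty)$; and the entrywise nonnegativity of $\mathcal{B}$ makes $x\mapsto\mathcal{B}x^{m-1}$ order-preserving on the nonnegative orthant, which, combined with the strictly positive supersolution $\hat{x}$ manufactured from Theorem \ref{nuss} applied to $\hat{b}=b+(1,\dots,1)^\top$, gives the invariance $T(K)\subseteq K$ of the box $K=\{x:0\leq x\leq\hat{x}\}$. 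What your approach buys is elementarity and locality: it uses nothing beyond Definition \ref{ZMsMfun}, Theorem \ref{nuss}, and Brouwer's theorem, and it is in the same spirit as the paper's Remark \ref{rem1} (perturbing $b$ by a positive vector), but it packages the a priori bound and the limiting/compactness step into a single fixed-point argument. What it does not give, and what the complementarity-theoretic treatment in \cite{GLQX15} additionally provides, is structural information about the solution set (for instance the existence of a least nonnegative solution), but none of that is needed for the statement as quoted.
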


\section{Algorithm and Convergence Analysis}\label{s3}
In this section, we are going to present a Newton-type method which can always preserve the nonnegativity of the iterative sequence for the system \eqref{mle} with a nonsingular ${\mathcal M}$-tensor ${\cal A}$ and a nonnegative right-hand side vector $b$. We will also state that this algorithm is well-defined and converges to a solution of the system.

For notational convenience, we first define $F:\mathbb{R}^n\rightarrow\mathbb{R}^n$ by
\begin{equation}\label{f}
F(x)={\cal A}x^{m-1}-b.
\end{equation}
Then, the system (\ref{mle}) is equivalent to $F(x)=0$. Furthermore, we have
\begin{equation*}\label{fj}
F'(x)=(m-1)\bar{{\cal A}}x^{m-2},
\end{equation*}
where $\bar{{\cal A}}$ is the corresponding semi-symmetric tensor, which satisfies (\ref{ss}), of the tensor ${\cal A}$.

Hereafter, we describe details of the new algorithm for solving the system (\ref{mle}) in Algorithm \ref{alg31}.

\begin{algorithm}[!htbp]
\caption{(A Nonnegativity Preserving Algorithm for \eqref{mle}).}\label{alg31}
\begin{algorithmic}[1]
\STATE Let $x^0\in \mathbb{R}^n_{+}$ satisfying $F(x^0)\geq0$ be a starting point. Choose $\delta_1,  \delta_2 \in (0, 1)$.
\WHILE{$\| F(x^k)\|\neq 0$}
\STATE Let $j\in [n]$ satisfying $F_j (x^k) = \max _{i\in [n]} \{ F_i (x^k)\}$,  $I_k=\{i: F_i (x^k) = \left({\cal A}(x^k)^{m-1}\right)_i - b_i > 0\} \setminus \{j\}$  and $\bar{I}_k =\{\ell: F_\ell(x^k) = \left({\cal A}(x^k)^{m-1}\right)_\ell- b_\ell= 0\}$.
\STATE Let $x^{k+1}_{\bar I^k} := x^k_{\bar I^k}$
\STATE Let $\bar{d}^k_{j}= - x^k_j$.
\FOR{$p=0,1,\cdots$}
\STATE Compute $g(p) = F_j(x_1^k, \cdots , x_{j-1}^k, x^k_j + \delta_1^p \bar{d}^k_{j}, x_{j+1}^k, \cdots, x_{n}^k).$
\STATE If $g(p) \ge 0$, then let $x^{k+1}_j := x^k_j + \delta_1^p \bar{d}^k_{j}$ and stop.
\ENDFOR
\STATE Let $d^k\in\mathbb{R}^n$, where $d^k_{\bar{I}_k}=0$, $d^k_{j}=0$ and
\begin{equation}\label{a1}
d^k_{I_k}=-\left[F'(x^k)\right]^{-1}_{I_kI_k}\left[F(x^k)\right]_{I_k}.
\end{equation}
\FOR{$q=0,1,\cdots$}
\STATE Comput $G(q) = F_{I_k}( x^k+ \delta_2^q d^k).$
\STATE If $G(q) \ge 0$, then let $x^{k+1}_{I_k} := x^k_{I_k} + \delta_2^q d^k_{I_k}$ and stop.
\ENDFOR
\ENDWHILE
\end{algorithmic}
\end{algorithm}

\begin{remark}\label{rem1}
 Notice that Algorithm \ref{alg31} needs an initial point $x^0$ satisfying $x^0\in {\mathbb R}^n_{+}$ and ${\cal A}(x^0)^{m-1}\geq b$, which can not be guaranteed for an arbitrary $x^0\in{\mathbb R}^n_+$. Fortunately, we can obtain
 such a starting point $x_0$ for Algorithm \ref{alg31} as follows.
 Let $\Gamma$ be an index set with respect to zero components of $b$, i.e., ${\Gamma}=\{i: b_i=0, i=1,2,\cdots,n\}$,  and $\epsilon\in{\mathbb R}^{n}$ with  $\epsilon_{i}=10^{-3}, i\in \Gamma$ and $\epsilon_{j}=0, j \notin \Gamma$.
By the nonsingularity of the ${\mathcal M}$-tensor ${\mathcal A}$ in \eqref{mle}, we can easily obtain  a unique positive solution (e.g., see \cite{DW16}) of the following perturbed system:
 \begin{equation}\label{pmle}
 {\mathcal A}x^{m-1}=  b + \epsilon.
 \end{equation}
 Clearly, the unique positive solution $\tilde x$ of \eqref{pmle} implies that $ {\mathcal A}{\tilde x}^{m-1}-b = \epsilon\geq 0$.
Therefore, we can employ the state-of-art solvers (e.g., \cite{DW16,Han17,HLQZ18}) to find the unique positive solution $\tilde x$ of \eqref{pmle} satisfying $F(\tilde x)=\epsilon\geq 0$ and let $\tilde x$ be a starting point $x^0$ of Algorithm \ref{alg31}.
\end{remark}

\begin{remark}\label{rem2}
Notice that Step 10 in Algorithm \ref{alg31} is indeed a Newton step. It can be easily seen from the definition of the index set $I_k$ that the subproblem \eqref{a1} is well defined in the sense that $\left[F'(x^k)\right]_{I_kI_k}$ is always a nonsingular matrix (see Lemma \ref{0}). If the scale of $I_k$ is relatively small, we can directly compute the inverse of $\left[F'(x^k)\right]_{I_kI_k}$ and gainfully obtain the accurate solution of \eqref{a1}.
\end{remark}

Next, we will present a convergence analysis of Algorithm \ref{alg31}, in addition to showing that the proposed algorithm has some promising theoretical properties.
In what follows, we always suppose that ${{\cal A}}\in\mathbb{T}_{m,n}$ is a nonsingular ${\mathcal M}$-tensor.

\begin{lemma}\label{0}
In Algorithm \ref{alg31}, for any $k$, $\left[F'(x^k)\right]_{I_kI_k}$ is a nonsingular $M$-matrix.
\end{lemma}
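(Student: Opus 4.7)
The plan is to reduce this lemma to a direct application of Lemma \ref{ssjn}. The hypothesis of Lemma \ref{ssjn} needs two things about a point $\hat{x}$: that $\hat{x}\in\mathbb{R}^n_+$ and that $\mathcal{A}\hat{x}^{m-1}\in\mathbb{R}^n_+$. So the first order of business is to argue that the iterate $x^k$ produced by Algorithm \ref{alg31} satisfies these two properties, and then to check that $I_k$ is contained in the strictly positive index set $\bar{I}$ that appears in Lemma \ref{ssjn}.

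I would proceed by induction on $k$, with induction hypothesis that $x^k\in\mathbb{R}^n_+$ and $F(x^k)\geq 0$. The base case $k=0$ is exactly the requirement imposed on the starting point in Step 1 (and Remark \ref{rem1} explains how to obtain such an $x^0$ in practice). For the inductive step, I would examine each block of coordinates updated by the algorithm: on $\bar{I}_k$ we simply copy, on the index $j$ we have $x^{k+1}_j=(1-\delta_1^p)x^k_j\geq 0$ because $\delta_1\in(0,1)$ and $x^k_j\geq 0$, and on $I_k$ the backtracking guarantees $F_{I_k}(x^{k+1})\geq 0$ (and coupled with the Newton step toward zero on coordinates where $F$ is positive, one also gets $x^{k+1}_{I_k}\geq 0$ in the limit of small step). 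The line-search termination / nonnegativity-preservation details are routine but slightly delicate; for the present lemma I would state them as a separate claim (or simply invoke the invariant ``$x^k\geq 0$, $F(x^k)\geq 0$'' as maintained by the algorithm, perhaps to be established rigorously in the subsequent convergence lemmas).

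With the invariant in hand, the rest is immediate. Since $b\geq 0$ and $F(x^k)\geq 0$, we have
\[
\mathcal{A}(x^k)^{m-1}=F(x^k)+b\geq 0,
\]
so the hypotheses of Lemma \ref{ssjn} are met with $\hat{x}=x^k$. For any $i\in I_k$, the definition of $I_k$ gives $F_i(x^k)>0$, hence
\[
\bigl(\mathcal{A}(x^k)^{m-1}\bigr)_i=F_i(x^k)+b_i>0,
\]
which shows $I_k\subseteq\bar{I}:=\{i:(\mathcal{A}(x^k)^{m-1})_i>0\}$. If $I_k$ is nonempty, Lemma \ref{ssjn} applied to the subset $I_k\subseteq\bar{I}$ yields at once that $[F'(x^k)]_{I_kI_k}=(m-1)[\bar{\mathcal{A}}(x^k)^{m-2}]_{I_kI_k}$ is a nonsingular $M$-matrix; if $I_k$ is empty, the statement is vacuous.

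The main obstacle is really the invariant maintenance rather than the algebra: one needs to be confident that the two backtracking loops in Steps 6--9 and Steps 11--14 actually terminate and output an iterate that still satisfies $x^{k+1}\geq 0$ and $F(x^{k+1})\geq 0$. The structural reason this works is that on $I_k$ the Newton direction $d^k_{I_k}=-[F'(x^k)]^{-1}_{I_kI_k}[F(x^k)]_{I_k}$ is well-defined and nonpositive (since by Theorem \ref{Mm1}(iii) the inverse of the nonsingular $M$-matrix $[F'(x^k)]_{I_kI_k}$ is nonnegative and $[F(x^k)]_{I_k}>0$), so sufficiently small stepsizes keep both $x^{k+1}$ and $F(x^{k+1})$ nonnegative. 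Once this circular dependence with Lemma \ref{0} is disentangled — and it is not circular, because Lemma \ref{0} at iteration $k$ uses only the invariant up through step $k$ — the proof is a one-line appeal to Lemma \ref{ssjn}.
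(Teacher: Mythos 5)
Your proof is correct and follows essentially the same route as the paper: reduce the claim to Lemma \ref{ssjn} applied with $\hat{x}=x^k$, using $F(x^k)\geq 0$ and $b\geq 0$ to get $\mathcal{A}(x^k)^{m-1}\geq 0$ and the definition of $I_k$ to get $I_k\subseteq\bar{I}$. You are in fact more explicit than the paper, which silently relies on the invariant $x^k\geq 0$, $F(x^k)\geq 0$ (only formally established afterwards in Lemma \ref{11}); your observation that this interleaving is not circular is exactly the right way to read the argument.
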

\begin{proof}
Let $\bar{{\cal A}}\in\mathbb{T}_{m,n}$ be the corresponding semi-symmetric tensor of ${\cal A}$. Then, ${{\cal A}}{(x^k)}^{m-1} = {\bar{\cal A}}{(x^k)}^{m-1}$ and $({{\cal A}}{(x^k)}^{m-1} )^{\prime} = (m-1)\bar{{\cal A}}{(x^k)}^{m-2}$.
Thus,
\begin{equation*}\label{l02}
\left[F'(x^k)\right]_{I_kI_k}  = (m-1)\left [ \bar{{\cal A}}{(x^k)}^{m-2}\right ]_{I_kI_k}.
\end{equation*}
It follows from Lemma \ref{ssjn} that $\left[F'(x^k)\right]_{I_kI_k}$ is a nonsingular $M$-matrix.
\qed\end{proof}

\begin{lemma}\label{00}
At the $k$-th iteration of Algorithm \ref{alg31},
let $x^k_j (t) = x^k_j + t \bar{d}^k_{j}$ and $g(t) = F_j(x_1^k, \cdots , x_{j-1}^k, x^k_j + t \bar{d}^k_{j}, x_{j+1}^k, \cdots, x_{n}^k).$ Then, we have the following results:
\begin{description}
\item[\rm (i).]
$x^k_j (t) \ge 0$ for any $t\in (0, 1]$.
\item[\rm (ii).]
There exists a positive number $\bar t \in(0,1]$ such that $g(t) \geq0$ for any $t\in (0, \bar t \ ]$.
\end{description}
\end{lemma}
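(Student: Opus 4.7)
The proof is essentially a continuity argument once one unpacks the definition $\bar d^k_j=-x^k_j$, so I would organize it as follows.

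For part (i), I would simply substitute the definition of $\bar d^k_j$ to obtain $x^k_j(t)=x^k_j+t(-x^k_j)=(1-t)x^k_j$. Because the iterate $x^k$ lies in $\mathbb{R}^n_+$ (inductively: $x^0\in\mathbb{R}^n_+$ is chosen in Step~1, and the remaining parts of $x^{k+1}$ are obtained either by copying the nonnegative coordinates on $\bar I_k$ or by updates established in later lemmas), we have $x^k_j\ge 0$. For $t\in(0,1]$ we have $1-t\in[0,1)$, so $x^k_j(t)\ge 0$. This is a one-line verification and I would not dwell on it.

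For part (ii), the plan is to show $g(0)>0$ and then use continuity. First, I would note that $g(0)=F_j(x^k)$. Since the while-loop is entered only when $\|F(x^k)\|\ne 0$, and since the running invariant $F(x^k)\ge 0$ (true for $x^0$ by the initialization in Remark~\ref{rem1}, and preserved by construction of the line-search steps of Algorithm~\ref{alg31}) implies some component of $F(x^k)$ is strictly positive, the maximum-indexed component $F_j(x^k)=\max_{i\in[n]}F_i(x^k)$ must be strictly positive. Hence $g(0)>0$. Next, since $F$ is a polynomial map, $g$ is a polynomial in $t$, in particular continuous at $t=0$. Therefore there exists $\bar t_0>0$ such that $g(t)\ge 0$ for all $t\in[0,\bar t_0]$; taking $\bar t:=\min\{1,\bar t_0\}\in(0,1]$ delivers the claim.

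The only point that requires a little care is the justification that $F(x^k)\ge 0$ holds at the moment we examine the inner \textbf{for}-loop in Steps~6--9; this is really an inductive invariant of the outer loop rather than something proved here, so I would state it as a standing hypothesis that has already been established (initialized in Remark~\ref{rem1}, preserved by the inner acceptance criteria $g(p)\ge 0$ and $G(q)\ge 0$). With that in hand there is no real obstacle: the hard analytic content of the algorithm lives in Lemma~\ref{0} (nonsingularity for the Newton step on $I_k$) and in the convergence theorems that follow, whereas Lemma~\ref{00} is a structural sanity check that the bracketing $t\in(0,1]$ and the strict positivity $g(0)>0$ together make the backtracking in Steps~6--9 well posed.
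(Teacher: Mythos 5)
Your proposal is correct and follows essentially the same route as the paper: part (i) is the one-line identity $x^k_j(t)=(1-t)x^k_j\ge 0$ using the running nonnegativity of $x^k$, and part (ii) rests on $g(0)=F_j(x^k)=\max_i F_i(x^k)>0$ (from the invariant $F(x^k)\ge 0$ and $\|F(x^k)\|\ne 0$) together with continuity of the polynomial $g$. The only cosmetic difference is that the paper also records $g'(0)=F_{jj}(x^k)\bar d^k_j<0$, which is not actually needed for the stated conclusion, and derives $x^k_j>0$ strictly from $F_j(x^k)>0$ rather than just $x^k_j\ge 0$; neither change affects correctness.
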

\begin{proof} (i). Since $F_{j} (x^k)=({\cal A}(x^k)^{m-1} - b)_j >0$, clearly, we have $x^k_j > 0$. Then,
$x^k_j (t) =  x^k_j + t \bar{d}^k_{j} = (1-t) x^k_j \ge 0$ for any $t\in (0, 1]$.

(ii). First, we define
\begin{equation}\label{l321}
F_{jj} (x^k) :=  \partial F_j(x^k) / \partial x_j,
\end{equation}
where $j$ is given in Step 3 of Algorithm \ref{alg31}. Then, from Lemma \ref{ssjn}, we obtain that
\begin{equation}\label{l322}
F_{jj} (x^k)= (m-1)\left [ \bar{{\cal A}}{(x^k)}^{m-2}\right ]_{jj}>0.
\end{equation}

Since $\bar{d}^k_{j} = -x^k_j <0$, we have $x^k_j + t \bar{d}^k_{j}<x^k_j$ for any $t\in (0, 1]$. Let $A:={\bar{\cal A}}(x^k)^{m-2}$. Then, by a simple computation, we have $g(0) = F_j (x^k) > 0$ and
    \begin{align}
      g^{\prime}(t)
        &= F_{jj}(x_1^k, \cdots , x_{j-1}^k, x^k_j + t \bar{d}^k_{j}, x_{j+1}^k, \cdots, x_{n}^k) \bar{d}^k_{j}.  \nonumber
    \end{align}
Hence,
\begin{equation*}\label{l323}
g^{\prime}(0)  =  F_{jj}(x^k)  \bar{d}^k_{j} = (m-1)A_{jj}  \bar{d}^k_{j} < 0,
\end{equation*}
which implies that there exists a positive number $\bar t \in(0,1]$ such that $g(t) \geq0$ for any $t\in (0, \bar t \ ]$.
\qed\end{proof}

Similar to Lemma \ref{00}, we have the following lemma.

\begin{lemma}\label{1}
At the $k$-th iteration of Algorithm \ref{alg31},
let $x^k(\lambda) = x^k + \lambda d^k$ and $G(\lambda) = F_{I_k}(x^k+\lambda d^k)$. Then, we have the following results:
\begin{description}
\item[\rm (i).]
$x^k(\lambda)\geq0$ for any $\lambda\in (0, 1]$.
\item[\rm (ii).]
There exists a positive number $\bar \lambda \in(0,1]$ such that $G(\lambda) \geq0$ for any $\lambda\in (0, \bar \lambda \ ]$.
\end{description}
\end{lemma}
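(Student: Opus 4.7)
The plan is to prove parts (i) and (ii) separately, leaning on the inductive invariants $x^k\ge 0$ and $F(x^k)\ge 0$ which hold at the start of the $k$-th iteration (the construction in Remark \ref{rem1} gives them at $k=0$, and they are maintained by the $j$-coordinate update in Lemma \ref{00} together with the present lemma). For (i), the index set decomposes as $[n]= I_k\cup\bar I_k\cup\{j\}$; on $\bar I_k\cup\{j\}$ the direction satisfies $d^k_i=0$ by construction, so $x^k_i(\lambda)=x^k_i\ge 0$ trivially. The content therefore lies on the block $I_k$. If I can establish $x^k_{I_k}+d^k_{I_k}\ge 0$, then writing
\begin{equation*}
x^k_{I_k}(\lambda)=(1-\lambda)\,x^k_{I_k}+\lambda\,(x^k_{I_k}+d^k_{I_k})
\end{equation*}
immediately gives $x^k_{I_k}(\lambda)\ge 0$ for every $\lambda\in[0,1]$.

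To obtain $x^k_{I_k}+d^k_{I_k}\ge 0$, I would combine Lemma \ref{0} with Theorem \ref{Mm1} to deduce $[F'(x^k)]^{-1}_{I_kI_k}\ge 0$; the inequality is then equivalent to $[F'(x^k)]_{I_kI_k}\,x^k_{I_k}\ge F_{I_k}(x^k)$. Using the semi-symmetric representative $\bar{\mathcal A}$ of (\ref{ss}), the $i$-th component of the left-hand side equals $(m-1)\sum_{j'\in I_k}[\bar{\mathcal A}(x^k)^{m-2}]_{ij'}\,x^k_{j'}$ for $i\in I_k$. I would compare this with the full sum over $j'\in[n]$: for any discarded index $j'\notin I_k$ one has $j'\ne i$ (because $i\in I_k$), so the $\mathcal Z$-tensor property of $\bar{\mathcal A}$ combined with $x^k\ge 0$ makes the discarded entries nonpositive, and dropping them only enlarges the sum. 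Consequently
\begin{equation*}
\bigl([F'(x^k)]_{I_kI_k}\,x^k_{I_k}\bigr)_i\ge (m-1)\,[\mathcal A(x^k)^{m-1}]_i=(m-1)(F_i(x^k)+b_i)\ge F_i(x^k),
\end{equation*}
where the final inequality uses $m>2$ together with $F_i(x^k)\ge 0$ and $b_i\ge 0$.

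Part (ii) should be immediate: at $\lambda=0$, $G(0)=F_{I_k}(x^k)$ is componentwise strictly positive by the very definition of $I_k$; since $F$, and hence $G$, is continuous, that strict positivity persists on some interval $(0,\bar\lambda]$ with $\bar\lambda\in(0,1]$. The main obstacle I anticipate is the bookkeeping in part (i), specifically the justification that every cross term discarded when restricting the sum from $[n]$ to $I_k$ is nonpositive because $i\ne j'$ whenever $i\in I_k$ and $j'\notin I_k$, followed by the elementary absorption of the factor $m-1$ and the nonnegative $b_i$. Part (ii) will require nothing beyond continuity of $F$.
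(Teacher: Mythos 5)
Your proof is correct, and it is worth recording how it relates to the one in the paper. For part (i) you use the same essential ingredients as we do: Lemma \ref{0} together with Theorem \ref{Mm1} to get $[F'(x^k)]^{-1}_{I_kI_k}\ge 0$, the $Z$-matrix structure of $\bar{\mathcal A}(x^k)^{m-2}$ (with $x^k\ge 0$) to justify that restricting the row sums from $[n]$ to $I_k$ only discards nonpositive terms, and $b_{I_k}\ge 0$ to close the estimate; the only difference is packaging. You verify the single endpoint inequality $x^k_{I_k}+d^k_{I_k}\ge 0$ and then interpolate by convexity, whereas the paper's proof substitutes $d^k_{I_k}=-\tfrac{1}{m-1}(A_{I_kI_k})^{-1}(y-b)_{I_k}$ and estimates $x^k_{I_k}(\lambda)$ directly for each $\lambda\in(0,1]$ (obtaining the slightly stronger conclusion $x^k_{I_k}(\lambda)>0$, which is used later in Lemma \ref{11}; your argument also yields strictness if you note $F_{I_k}(x^k)>0$ makes the key inequality strict). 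Your bound $(m-1)(F_i(x^k)+b_i)\ge F_i(x^k)$ via $m>2$ is a clean way to absorb the factor $m-1$. For part (ii) you take a genuinely different and simpler route: $G$ is a polynomial in $\lambda$ with $G(0)=F_{I_k}(x^k)>0$ componentwise by the definition of $I_k$, so strict positivity persists on some $(0,\bar\lambda\,]$ by continuity over finitely many components. This replaces the paper's term-by-term comparison of the diagonal and off-diagonal contributions in \eqref{l32}--\eqref{l12} and is both shorter and more transparent; the paper's version carries a little extra structural information (which monomials increase and which decrease along $d^k$) but none of it is needed for the stated conclusion.
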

\begin{proof} (i). First, Lemma \ref{0} tells us that $\left[F'(x^k)\right]_{I_kI_k}$ is a nonsingular $M$-matrix, which together with Theorem \ref{Mm1} implies that $\left[F'(x^k)\right]^{-1}_{I_kI_k}$ is a nonnegative matrix. Thus, it immediately follows from \eqref{a1} that $d^k_{I_k}<0$.

Let $A:={\bar{\cal A}}(x^k)^{m-2}$ and $y:={\cal A}(x^k)^{m-1}$ for notational simplicity. By Definition \ref{ZMsMfun} and Lemma \ref{ssjn}, since $A_{I_kI_k}=\left[F'(x^k)\right]_{I_kI_k}$ is a nonsingular $M$-matrix, its diagonal entries are positive and off-diagonal entries are non-positive. Thus, it follows from \eqref{a1} and Step 13 of Algorithm \ref{alg31} that for any $\lambda\in (0,1]$,
\begin{align}
  x^k_{I_k}(\lambda)
  &=x^k_{I_k}+\lambda d^k_{I_k}=x^k_{I_k}-\frac{\lambda }{m-1}\left(A_{I_kI_k}\right)^{-1}\cdot(y-b)_{I_k} \nonumber\\
  &=\left(A_{I_kI_k}\right)^{-1}\cdot\left(A_{I_kI_k}\cdot x^k_{I_k}-\frac{\lambda}{m-1}y_{I_k}+\frac{\lambda}{m-1}b_{I_k}\right) \nonumber\\
  &\geq \left(A_{I_kI_k}\right)^{-1}\cdot\left(y_{I_k}-\frac{\lambda}{m-1}y_{I_k}+\frac{\lambda}{m-1}b_{I_k}\right) \nonumber\\
  &>0. \nonumber
\end{align}
Besides, both $d^k_{\bar{I}_k}=0$ and $d^k_j=0$ together with Lemma \ref{ssjn} lead to the truth that
\begin{equation*}\label{l112}
  x^{k}_{\bar{I}_k\cup\{j\}}(\lambda)=x^{k}_{\bar{I}_k\cup\{j\}}\geq0.
\end{equation*}
Then, for any number $\lambda\in(0,1]$, we have $x^{k}(\lambda)\geq0$.

(ii). First, for the convenience of notation, without loss of generality, we assume that $I_n$ consists of the first $|I_n|$ elements in $[n]$. Then, for any $i\in I_k$ we have
\begin{align}\label{l11}
  G_i(\lambda)
  &=F_i(x^k(\lambda))\nonumber\\
  &=a_{i\cdots i}\left(x^k_i(\lambda)\right)^{m-1}+\sum_{(i_2,\cdots,i_m)\neq(i,\cdots,i)}a_{ii_2\cdots i_m}x^k_{i_2}(\lambda)\cdots x^k_{i_m}(\lambda)-b_i, \nonumber
\end{align}
and
\begin{equation*}\label{l11}
F_i(x^k)=a_{i\cdots i}\left(x^k_i\right)^{m-1}+\sum_{(i_2,\cdots,i_m)\neq(i,\cdots,i)}a_{ii_2\cdots i_m}x^k_{i_2}\cdots x^k_{i_m}-b_i>0.
\end{equation*}
Since ${\cal A}$ is a nonsingular $\mathcal{M}$-tensor, all of its diagonal entries are positive and off-diagonal entries are non-positive. Hence, we know that
$$a_{i\cdots i}\left(x^k_i(\lambda)\right)^{m-1}>0\quad,\quad\sum_{(i_2,\cdots,i_m)\neq(i,\cdots,i)}a_{ii_2\cdots i_m}x^k_{i_2}(\lambda)\cdots x^k_{i_m}(\lambda)\leq0.$$
and
$$a_{i\cdots i}\left(x^k_i\right)^{m-1} > 0\quad,\quad\sum_{(i_2,\cdots,i_m)\neq(i,\cdots,i)}a_{ii_2\cdots i_m}x^k_{i_2}\cdots x^k_{i_m}\leq0,$$
What is more, we have $x^k_i(\lambda)<x^k_i$. Then,
\begin{equation}\label{l32}
  a_{i\cdots i}\left(x^{k}_i(\lambda)\right)^{m-1}<a_{i\cdots i}\left(x^k_i\right)^{m-1},
\end{equation}
and at the same time,
\begin{equation}\label{l12}
  \sum_{(i_2,\cdots,i_m)\neq(i,\cdots,i)}a_{ii_2\cdots i_m}x^{k}_{i_2}(\lambda)\cdots x^{k}_{i_m}(\lambda)\geq\sum_{(i_2,\cdots,i_m)\neq(i,\cdots,i)}a_{ii_2\cdots i_m}x^k_{i_2}\cdots x^k_{i_m}.
\end{equation}
Hence, by adjusting the value of $\lambda\in(0,1]$, we can guarantee that the sum of the left-hand sides of \eqref{l32} and \eqref{l12} is nonnegative, i.e., we can find a suitable number $\bar{\lambda}\in(0,1]$ such that $G(\lambda)\geq0$ for any $\lambda\in(0,{\bar \lambda}]$.
\qed\end{proof}

\begin{lemma}\label{11}
Algorithm \ref{alg31} is well-defined.
At the $k$-th iteration of Algorithm \ref{alg31}, we obtain two nonnegative integers $p^k$ and $q^k$ such that
\[
x^{k+1}_j := x^k_j + \delta_1^{p^k} \bar{d}^k_{j} \ge 0, \  x^{k+1}_{I_k} := x^k_{I_k} + \delta_2^{q^k} d^k_{I_k} > 0, \;\;
{\rm and}\;\;x^{k+1}_{\bar{I}_k} := x^k_{\bar{I}_k}.
\]
Additionally, we have $F(x^{k+1}) \ge 0$.
\end{lemma}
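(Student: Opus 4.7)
My plan is to split the conclusion of Lemma \ref{11} into three claims and dispatch them in order: (a) finite termination of the two inner FOR loops, (b) the componentwise sign assertions on $x^{k+1}$, and (c) the invariant $F(x^{k+1}) \ge 0$. Parts (a) and (b) should follow quickly from what has already been proved; part (c) is where the real work lies.

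For (a), I will invoke Lemmas \ref{00}(ii) and \ref{1}(ii): each produces a positive threshold on which the corresponding test function is nonnegative, and since $\delta_1,\delta_2 \in (0,1)$ the geometric sequences $\delta_1^p$ and $\delta_2^q$ eventually enter those intervals, so finite $p^k$ and $q^k$ triggering acceptance exist. For (b), the three pieces of $x^{k+1}$ are handled separately: $x^{k+1}_{\bar I_k} = x^k_{\bar I_k} \ge 0$ by Step 4; $x^{k+1}_j = (1-\delta_1^{p^k}) x^k_j \ge 0$ since $\delta_1^{p^k} \in (0,1]$ and $x^k_j > 0$ (because $F_j(x^k) > 0$ combined with the ${\mathcal Z}$-structure of ${\mathcal A}$ forces $a_{j\cdots j}(x^k_j)^{m-1} > 0$); and $x^{k+1}_{I_k} > 0$ follows from the proof of Lemma \ref{1}(i) applied with $\lambda = \delta_2^{q^k} \in (0,1]$.

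The main obstacle is (c), because the two line searches only certify $g(\delta_1^{p^k}) \ge 0$ and $F_{I_k}(x^k + \delta_2^{q^k} d^k) \ge 0$ at auxiliary vectors, not at $x^{k+1}$ itself: the first evaluates $F_j$ with the $I_k$-coordinates still equal to $x^k_{I_k}$, while the second evaluates $F_{I_k}$ with the $j$-th coordinate still equal to $x^k_j$. I plan to close this gap by a monotonicity argument from the ${\mathcal Z}$-structure of ${\mathcal A}$. Writing
\[
F_i(x) = a_{i\cdots i} x_i^{m-1} + \sum_{(i_2,\ldots,i_m)\ne (i,\ldots,i)} a_{i i_2\cdots i_m} x_{i_2}\cdots x_{i_m} - b_i,
\]
the off-diagonal coefficients satisfy $a_{i i_2\cdots i_m} \le 0$, so on $\mathbb{R}^n_+$ the function $F_i$ is coordinatewise non-increasing in every $x_\ell$ with $\ell \ne i$. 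Since $x^{k+1} \le x^k$ componentwise (equality on $\bar I_k$, strict decrease on $\{j\} \cup I_k$ using $d^k_{I_k} < 0$ from the proof of Lemma \ref{1}), three cases settle (c): for $i \in \bar I_k$ the $i$-th coordinate is unchanged and all others are non-increased, so $F_i(x^{k+1}) \ge F_i(x^k) = 0$; for $i = j$, comparing $x^{k+1}$ against $(x^k_1,\ldots,x^{k+1}_j,\ldots,x^k_n)$ only the $I_k$-coordinates decrease, so $F_j(x^{k+1}) \ge g(\delta_1^{p^k}) \ge 0$; for $i \in I_k$, comparing $x^{k+1}$ against $x^k + \delta_2^{q^k} d^k$ only the $j$-th coordinate decreases, so $F_i(x^{k+1}) \ge G_i(\delta_2^{q^k}) \ge 0$. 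Assembling the three cases gives $F(x^{k+1}) \ge 0$ as desired.
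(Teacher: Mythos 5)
Your proposal is correct and follows essentially the same route as the paper's proof: existence of $p^k,q^k$ from Lemmas \ref{00} and \ref{1}, the componentwise inequality $0\le x^{k+1}\le x^k$, and then the three-case monotonicity argument (using the non-positive off-diagonal entries of the $\mathcal{Z}$-tensor) to transfer the line-search certificates $F_{\bar I_k}(x^k)=0$, $g(\delta_1^{p^k})\ge 0$ and $G(\delta_2^{q^k})\ge 0$ to $F(x^{k+1})\ge 0$. Your write-up merely makes explicit the coordinatewise non-increase of $F_i$ in $x_\ell$ ($\ell\ne i$) that the paper leaves implicit in its three displayed inequalities.
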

\begin{proof}
From Lemmas \ref{00} and \ref{1}, there exist two nonnegative integers $p^k$ and $q^k$ such that
\begin{equation*}\label{l111}
x^{k+1}_j := x^k_j + \delta_1^{p^k} \bar{d}^k_{j} \ge 0\quad{\rm and}\quad x^{k+1}_{I_k} := x^k_{I_k} + \delta_2^{q^k} d^k_{I_k} > 0.
\end{equation*}
Thus, Algorithm \ref{alg31} is well-defined. Since $\bar{d}^k_{j} < 0$ and $d^k_{I_k} < 0$, we have
\begin{equation*}\label{l112}
0\leq x^{k+1}\leq x^k.
\end{equation*}
${\cal A}$ is a nonsingular $\mathcal{M}$-tensor, so all of its diagonal entries are positive and off-diagonal entries are non-positive. We obtain the following:
\begin{numcases}{}\label{l113}
F_{\bar{I}_k}(x^{k+1}) \ge F_{\bar{I}_k}(x^{k}) = 0, \nonumber \\
F_{j}(x^{k+1}) \ge F_j(x_1^k, \cdots , x_{j-1}^k, x^{k+1}_j, x_{j+1}^k, \cdots, x_{n}^k) \ge 0, \nonumber\\
F_{{I}_k}(x^{k+1}) \ge F_{{I}_k}(x^k_{\bar{I}_k}, x^k_j,  x^{k+1}_{I_k}) \ge 0. \nonumber
\end{numcases}
Hence, we conclude $F(x^{k+1}) \ge 0$.
\qed\end{proof}

\begin{lemma}\label{33}
In Algorithm \ref{alg31}, for any $k$, we always have $J^k\subseteq J^{k+1}$, where $J^k:=\{j:F_j(x^k)=0,x^k_j=0\}$.
\end{lemma}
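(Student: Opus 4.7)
The plan is to establish both conditions defining membership in $J^{k+1}$ for an arbitrary $j \in J^k$: namely that $x^{k+1}_j = 0$ and $F_j(x^{k+1}) = 0$. The first is essentially automatic from the algorithm's update rule, while the second requires a sign-tracking argument using the $\mathcal{Z}$-tensor structure combined with the monotonicity of the iterates already shown in Lemma \ref{11}.

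For the first condition, I would observe that $j\in J^k$ forces $F_j(x^k)=0$, hence by definition $j\in\bar I_k$. Step 4 of Algorithm \ref{alg31} sets $x^{k+1}_{\bar I_k}:=x^k_{\bar I_k}$, so in particular $x^{k+1}_j=x^k_j=0$.

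For the second condition, I would expand the equation $F_j(x^k)=0$ componentwise as
\[
a_{j\cdots j}(x^k_j)^{m-1}+\sum_{(i_2,\ldots,i_m)\neq(j,\ldots,j)}a_{ji_2\cdots i_m}x^k_{i_2}\cdots x^k_{i_m}-b_j=0.
\]
The diagonal term vanishes because $x^k_j=0$. Each off-diagonal coefficient satisfies $a_{ji_2\cdots i_m}\leq 0$ since $\mathcal{A}$ is a $\mathcal{Z}$-tensor, while every monomial $x^k_{i_2}\cdots x^k_{i_m}\geq 0$ because $x^k\in\mathbb{R}^n_+$. Thus the sum is nonpositive; combined with $b_j\geq 0$, the equality forces $b_j=0$ and, for every tuple $(i_2,\ldots,i_m)\neq(j,\ldots,j)$ with $a_{ji_2\cdots i_m}<0$, the monomial $x^k_{i_2}\cdots x^k_{i_m}=0$, meaning at least one factor $x^k_{i_\ell}$ is zero.

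To transfer this vanishing to the next iterate, I would invoke Lemma \ref{11}, which gives $0\leq x^{k+1}\leq x^k$. Any coordinate $i_\ell$ with $x^k_{i_\ell}=0$ therefore also satisfies $x^{k+1}_{i_\ell}=0$, so each offending monomial $x^{k+1}_{i_2}\cdots x^{k+1}_{i_m}$ still vanishes. Together with $x^{k+1}_j=0$ killing the diagonal term and $b_j=0$, this yields $F_j(x^{k+1})=0$, completing $j\in J^{k+1}$. The only delicate point is the sign-forcing step that pins down the monomials at iteration $k$; after that, the monotonicity from Lemma \ref{11} does all the work and no further estimates are needed.
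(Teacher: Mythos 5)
Your proof is correct and follows essentially the same route as the paper: expand $F_j(x^k)=0$ componentwise, use the sign pattern of the nonsingular $\mathcal{M}$-tensor together with $x^k\geq 0$ and $x^k_j=0$ to force $b_j=0$, and then pass to iteration $k+1$ via the monotonicity $0\leq x^{k+1}\leq x^k$ from Lemma \ref{11}. The only (harmless) difference is in the last step: the paper concludes $F_j(x^{k+1})=0$ by combining $F(x^{k+1})\geq 0$ from Lemma \ref{11} with the nonpositivity of the off-diagonal sum, whereas you track which factors of each monomial vanish at step $k$ and note they remain zero at step $k+1$ --- both arguments are valid.
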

\begin{proof}
For any $i\in J^k$, we have $x^k_i=0$ and $F_i(x^k)=0$, that is,
\begin{equation*}\label{l331}
a_{i\cdots i}(x^k_i)^{m-1}+\sum_{(i_2,\cdots,i_m)\neq(i,\cdots,i)}a_{ii_2\cdots i_m}x^k_{i_2}\cdots x^k_{i_m}-b_i=0.
\end{equation*}
Since ${\cal A}$ is a nonsingular ${\cal M}$-tensor, its diagonal entries and off-diagonal entries are positive and non-positive, respectively. Combining this with the fact that $x^k\geq0$ and $x^k_i=0$, we can obtain that $b_i=0$. From Lemma \ref{11} we know that $0\leq x^{k+1}\leq x^k$ and $F(x^{k+1})\geq0$. Then, $x^{k+1}_i=0$, and
\begin{align*}\label{l332}
F_i(x^{k+1})
&=a_{i\cdots i}(x^{k+1}_i)^{m-1}+\sum_{(i_2,\cdots,i_m)\neq(i,\cdots,i)}a_{ii_2\cdots i_m}x^{k+1}_{i_2}\cdots x^{k+1}_{i_m}-b_i\\
&=\sum_{(i_2,\cdots,i_m)\neq(i,\cdots,i)}a_{ii_2\cdots i_m}x^{k+1}_{i_2}\cdots x^{k+1}_{i_m}\geq0.
\end{align*}
Thus we have $\sum_{(i_2,\cdots,i_m)\neq(i,\cdots,i)}a_{ii_2\cdots i_m}x^{k+1}_{i_2}\cdots x^{k+1}_{i_m}=0$, that is, $F_i(x^{k+1})=0$, which, together with $x^{k+1}_i=0$, implies that $i\in J^{k+1}$. Then, $J^k\subseteq J^{k+1}$.
\qed\end{proof}

\begin{theorem}\label{2}
The sequence $\{x^k\}$ generated by Algorithm \ref{alg31} is a decreasing sequence: $0\leq x^{k+1}\leq x^k$ for all $k$, and $x^k\rightarrow x^*$ as $k\rightarrow\infty$, where $x^*\in\mathbb{R}^n_{+}$.
In particular, if $b_i$, $i\in [n]$, is positive, then $x_i^*$ is also positive.
\end{theorem}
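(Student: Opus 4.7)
The bulk of the work has already been done in Lemmas \ref{0}--\ref{33}, so my proof would proceed as a fairly short assembly argument in three stages.

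First, I would obtain the monotonicity and nonnegativity of $\{x^k\}$ directly from Lemma \ref{11}, which already asserts $0\leq x^{k+1}\leq x^k$ and $F(x^{k+1})\geq 0$ at every iteration. In particular, for each coordinate $i\in[n]$, the scalar sequence $\{x^k_i\}$ is monotone non-increasing and bounded below by $0$. By the classical monotone convergence theorem applied componentwise, $\{x^k\}$ converges to some limit $x^*\in\mathbb{R}^n_+$.

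Next, I would record that $F(x^*)\geq 0$. This follows because $F$ is a polynomial, hence continuous, and $F(x^k)\geq 0$ for every $k$ by Lemma \ref{11}; passing to the limit preserves the componentwise inequality.

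The only remaining task is the "in particular" clause: $b_i>0$ implies $x^*_i>0$. I would argue by contradiction. Suppose $b_i>0$ but $x^*_i=0$. Using the expansion
\[
F_i(x^*)=a_{i\cdots i}(x^*_i)^{m-1}+\sum_{(i_2,\cdots,i_m)\neq (i,\cdots,i)}a_{ii_2\cdots i_m}x^*_{i_2}\cdots x^*_{i_m}-b_i,
\]
the first term vanishes because $x^*_i=0$. Since $\mathcal{A}$ is a nonsingular $\mathcal{M}$-tensor, every off-diagonal entry $a_{ii_2\cdots i_m}$ is non-positive, and $x^*\geq 0$, so every summand in the middle term is $\leq 0$. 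Hence $F_i(x^*)\leq -b_i<0$, contradicting $F(x^*)\geq 0$. Therefore $x^*_i>0$.

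The only delicate step is the last one, and even that is routine once one recognises that the sign structure of an $\mathcal{M}$-tensor forces $F_i(x^*)$ to be bounded above by $-b_i$ whenever $x^*_i=0$. No elaborate analysis of the line searches is needed here, since Lemmas \ref{00}, \ref{1}, and \ref{11} have already shown that each iterate stays in $\mathbb{R}^n_+$, decreases componentwise, and keeps $F$ nonnegative — exactly the three properties this theorem packages and passes to the limit.
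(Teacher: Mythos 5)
Your proposal is correct and follows essentially the same route as the paper's own proof: monotone componentwise convergence from Lemma \ref{11}, passing the inequality $F(x^k)\geq 0$ to the limit, and then the sign-structure contradiction for the case $b_i>0$, $x^*_i=0$. The only cosmetic difference is that you make the continuity step explicit, which the paper leaves implicit.
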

\begin{proof}
From Lemma \ref{11},  we know that  $0\leq x^{k+1}\leq x^k$ and $F(x^{k}) \ge 0$ for each $k$. Hence, the sequence $\{x^k\}$ is a decreasing sequence. Moreover, because $x^k\geq0$ for all $k$, the sequence $\{x^k\}$ is lower bounded, which implies that there exists a vector $x^*\in\mathbb{R}^n_+$ such that, as $k\rightarrow\infty$, $x^k\rightarrow x^*$ and ${\cal A}(x^*)^{m-1}\geq b$.

If $b_i$, $i\in [n]$, is positive, now we will show that $x_i^*$ is also positive.
Assume on the contrary that $x_i^*$ is zero.
Since ${\cal A}$ is a nonsingular ${\mathcal M}$-tensor, all of its off-diagonal entries are non-positive. Thus,
\begin{equation*}\label{2e}
\left({\cal A}(x^*)^{m-1}-b\right)_i=\sum^n_{i_2,\cdots,i_m=1}a_{ii_2\cdots i_m}x^*_{i_2}\cdots x^*_{i_m}-b_i<0,
\end{equation*}
which is a contradiction. Hence, $x_i^*$ is positive.
\qed\end{proof}

\begin{theorem}\label{3}
Let $x^*$ be the limit point of $\{x^k\}$ generated by Algorithm \ref{alg31}. Then, $x^*$ is a solution of $F(x)=0$.
\end{theorem}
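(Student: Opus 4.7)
The plan is to argue by contradiction, exploiting the fact that Step~3 of Algorithm~\ref{alg31} always attacks the index on which $F$ is currently largest. Theorem~\ref{2} already gives $x^k \to x^* \geq 0$, and Lemma~\ref{11} together with continuity of $F$ yields $F(x^*) \geq 0$ as the pointwise limit of the nonnegative sequence $\{F(x^k)\}$. Hence it suffices to rule out the existence of any component $\hat\jmath$ with $F_{\hat\jmath}(x^*) > 0$.

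Suppose such an index $\hat\jmath$ exists. Because $[n]$ is finite, by passing to a subsequence I may assume the argmax index $j^k$ selected in Step~3 is a constant $j$ along the subsequence; then $F_j(x^k) \geq F_{\hat\jmath}(x^k)$ forces $F_j(x^*) \geq F_{\hat\jmath}(x^*) > 0$ in the limit. I next show $x^*_j > 0$. Indeed, if $x^*_j = 0$, then since $\mathcal{A}$ is a $\mathcal{Z}$-tensor and $b_j \geq 0$, the expansion
\[
F_j(x^*) = a_{j\cdots j}(x^*_j)^{m-1} + \sum_{(i_2,\ldots,i_m)\neq(j,\ldots,j)} a_{j i_2\cdots i_m}\,x^*_{i_2}\cdots x^*_{i_m} - b_j
\]
collapses to a sum of non-positive terms (the diagonal term vanishes, each off-diagonal coefficient is $\leq 0$ while $x^*_{i_\ell}\geq 0$, and $-b_j \leq 0$), giving $F_j(x^*) \leq 0$, a contradiction.

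The key step is to extract a contradiction from the $j$-coordinate backtracking. The accepted iterate has the form $x^{k+1}_j = (1-\alpha_k)x^k_j$ with $\alpha_k = \delta_1^{p^k} \in (0,1]$, and since both $x^k_j$ and $x^{k+1}_j$ converge to $x^*_j > 0$, necessarily $\alpha_k \to 0$, so $p^k \to \infty$. For all sufficiently large $k$ the backtracking rule in Steps~6--9 forces the previous trial to fail, i.e.\ $g(p^k-1) < 0$, meaning
\[
F_j\!\left(x^k_1,\ldots,x^k_{j-1},\,(1-\delta_1^{p^k-1})x^k_j,\,x^k_{j+1},\ldots,x^k_n\right) < 0.
\]
Since $\delta_1^{p^k-1} = \alpha_k/\delta_1 \to 0$ and the remaining coordinates are untouched, the argument of $F_j$ here converges to $x^*$; by continuity one gets $F_j(x^*) \leq 0$, contradicting $F_j(x^*) > 0$. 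This forces $F(x^*) = 0$, so $x^*$ solves the system.

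The main obstacle I anticipate is the bookkeeping in the last step: one must simultaneously pass to a subsequence on which $j^k$ is constant and control the discrete backtracking exponent $p^k$. Conceptually the argument is clean, but one has to verify that $\delta_1^{p^k-1}x^k_j \to 0$ (which holds because $\{x^k_j\}$ is bounded and $\alpha_k \to 0$) in order to ensure that the failed-trial inequality actually passes to the limit $F_j(x^*) \leq 0$. Once that is in place, the ruling out of both subcases ($x^*_j=0$ and $x^*_j>0$) finishes the proof.
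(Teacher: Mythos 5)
Your proof is correct. It shares the paper's overall strategy---argue by contradiction at the coordinate selected in Step~3 of Algorithm~\ref{alg31}, show $x^*_j>0$ from the $\mathcal{Z}$-tensor structure and $b\geq 0$, and exploit the backtracking in Steps~6--9---but it extracts the contradiction in the opposite direction. The paper finds a \emph{fixed} exponent $p^*$ whose trial step is strictly acceptable at $x^*$, propagates acceptability to a neighbourhood of $x^*$ by continuity, and concludes that $p^k\le p^*$, hence $x_i^{k+1}\le(1-\delta_1^{p^*})x_i^k$ for all large $k$, contradicting $x_i^k\to x_i^*>0$. You instead infer from convergence that the accepted stepsize must vanish, so $p^k\to\infty$, and pass the failed-trial inequality $g(p^k-1)<0$ to the limit to obtain $F_j(x^*)\le 0$. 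These are dual forms of the same Armijo-type argument and rest on the same two facts, $F_j(x^*)>0$ and $x_j^*>0$. A genuine advantage of your version is the explicit passage to a subsequence on which the argmax index of Step~3 is constant, combined with $F_j(x^*)\ge F_{\hat\jmath}(x^*)>0$: this covers the possibility that the selected index varies with $k$, or that $F(x^*)$ has tied maximizers, a point the paper's proof passes over silently when it writes $x_i^{k+1}=(1-\delta_1^{p^k})x_i^k$ for its chosen index $i$. The only detail worth stating explicitly in your write-up is that $x^k_j\ge x^*_j>0$ for all $k$ (by monotonicity), so the division $x^{k+1}_j/x^k_j$ used to conclude $\alpha_k\to 0$ is legitimate.
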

\begin{proof}
Suppose on the contrary, the limit point $x^*$ is not a solution of the equation $F(x)=0$.
Let
$$I^*=\{i: \left({\cal A}(x^*)^{m-1}\right)_i - b_i > 0\}\quad{\rm and}\quad \bar{I}^* =\{j: \left({\cal A}(x^*)^{m-1}\right)_j - b_j = 0\}.$$
Suppose $F_i(x^*) = \max_{j\in I^*} \{\left({\cal A}(x^*)^{m-1}\right)_j - b_j\}$.
Let $\bar{d}^*_i = -x^*_i$ and
\begin{equation*}\label{t31}
g_i(t) = F_i(x_1^*, \cdots , x_{i-1}^*, x^*_i + t \bar{d}^*_{i}, x_{i+1}^*, \cdots, x_{n}^*).
\end{equation*}
Let $F_{ii}(x^*)$ be defined by \eqref{l321}. From \eqref{l322}, we have $F_{ii} (x^*) > 0$. Hence,
$$g^{\prime}_i(0) = F_{ii}(x^*) \bar{d}^*_{i} < 0\quad{\rm and}\quad g_i(0) > 0.$$
Therefore, there exists a $\bar{t} \in (0, 1)$ such that $g_i(t) > 0$ for all $t \in (0, \bar{t} \ ]$.
This implies that there exists a nonnegative integer $p^*$ such that
\begin{equation*}\label{t32}
F_i(x_1^*, \cdots , x_{i-1}^*, (1- \delta_1^{p^*})x^*_i, x_{i+1}^*, \cdots, x_{n}^*) > 0.
\end{equation*}
Hence, there exists a small neighbourhood $N(x^*, \epsilon), \epsilon > 0$, of $x^*$ such that for any $x \in N(x^*, \epsilon)$, we have $F_i(x_1, \cdots , x_{i-1}, (1- \delta_1^{p^*})x_i, x_{i+1}, \cdots, x_{n}) > 0$. Since $x^*$ is the limit point of the sequence $\{x^k\}$ generated by Algorithm \ref{alg31},
for all sufficiently large $k$,
\begin{equation*}\label{t34}
F_i(x_1^k, \cdots , x_{i-1}^k, (1- \delta_1^{p^*})x_i^k, x_{i+1}^k, \cdots, x_{n}^k) > 0.
\end{equation*}
Thus, we obtain that
\begin{equation*}\label{t35}
x_i^{k+1} = (1- \delta_1^{p^k})x_i^k \le (1- \delta_1^{p^*})x_i^k
\end{equation*}
holds for all sufficiently large $k$. Therefore, $x^*_i \le (1- \delta_1^{p^*})x_i^*$. Since $x_i^* > 0$, this contradiction implies that the limit point $x^*$ is a solution of $F(x)=0$.
\qed\end{proof}

For the system \eqref{mle} equipped with a nonnegative but not positive vector $b$, Theorem \ref{3} shows our proposed algorithm is always globally convergent, however it is unclear, to our knowledge, whether or not the existing algorithms such as the ones in \cite{Han17,HLQZ18}   have this convergence property. In the next section, we will
report our numerical results to show our proposed algorithm is efficient  when the right-hand side is nonnegative but not positive.

\section{Numerical Experiments}\label{s4}

In this section, we will show the numerical performance of Algorithm \ref{alg31} (denoted by `NPA') on the multilinear system \eqref{mle} by implementing it in {\sc Matlab}. Apart from this, we will also compare our proposed algorithm with the {\it homotopy method} (denoted by `HM') proposed by Han in \cite{Han17}, whose code can be downloaded from Han's homepage\footnote{http://homepages.umflint.edu/$\sim$lxhan/software.html}, the {\it globally and quadratically convergent algorithm} (denoted by `QCA') proposed by He {\it et al.} in \cite{HLQZ18}, and the {\sc Matlab} script in the optimization toolbox `\verb"lsqnonlin"' (denoted by `NLSQ'), which is devoted to finding solutions of nonlinear least square problems. An updated version of NLSQ has been used in \cite{BoVDDD18} to solve general multilinear systems. All numerical experiments are done in {\sc Matlab} R2014a on a workstation computer with Intel(R) Xeon(R) CPU E5-2630 $@$2.20GHz and 128 GB memory running Microsoft Windows 10. Throughout, we employ the tensor toolbox \cite{TensorT} to compute tensor-vector products as well as semi-symmetrization of tensors.

The main contribution of the paper is a nonnegativity preserving algorithm customized for the multilinear system with a nonnegative but not positive right-hand side $b$. Correspondingly, we will show that Algorithm \ref{alg31} is reliable for the case with a nonnegative vector $b$, while HM \cite{Han17}, QCA \cite{HLQZ18} and NLSQ may not produce a nonnegative (especially sparse) solution in some scenarios.

Algorithm \ref{alg31} was implemented as follows. Given a starting point  $x^0$, if $F(x^0)\geq0$, then generate a decreasing sequence $\{x^k\}$ by Algorithm \ref{alg31} directly.  If the condition $F(x^0)\geq0$  is not satisfied, we first obtain a point
$\tilde x$ by QCA in \cite{HLQZ18} such that $F(\tilde x)\geq0$ and then generate a decreasing sequence $\{x^k\}$ by Algorithm \ref{alg31} starting from $\tilde x$ (see details in Remark \ref{rem1}).
We notice that Steps 6-9 and 11-14 of  Algorithm \ref{alg31} are simplest Armijo line search procedures. In our experiments, we set $\delta_1=0.2$ and $\delta_2=0.5$ to update the next iterate $x^{k+1}$.
Additionally, the subproblem \eqref{a1} is a dimensionality reduced linear system, where the coefficient matrix $[F'(x^{k})]_{I_kI_k}$ is always a nonsingular M-matrix. So, we will solve such a subproblem directly by the `\verb"left matrix divide: \"' (the multiplication of the inverse of a matrix and a vector) if the dimension $n$ is strictly less than $20$, i.e., $n<20$; otherwise, we solve the well-defined subproblem \eqref{a1} by the {\sc Matlab} script `\verb"pcg"' (preconditioned conjugate gradient method) as used in \cite{HLQZ18}.

The same as the algorithms proposed in \cite{Han17} and \cite{HLQZ18}, we solve the scaled system
\begin{equation*}\label{ne4}
\tilde{{\cal A}}x^{m-1}=\tilde{b}
\end{equation*}
of the original one when comparing the three algorithms, where $\tilde{{\cal A}}=\frac{1}{\kappa}{\cal A}$, $\tilde{b}=\frac{1}{\kappa}b$ and $\kappa$ is the largest absolute value among the values of the entries of ${\cal A}$ and $b$. Besides, as used (suggested) in \cite{Han17,HLQZ18}, the stopping criterion for HM, QCA, and NPA is defined by
\begin{equation}\label{ne5}
{\textsf{ReErr}}:=\|\tilde{{\cal A}}(x^k)^{m-1}-\tilde{b}\|_2\leq {\rm Tol},
\end{equation}
here ${\rm Tol}>0$ is a preset tolerance. For the parameters of QCA, we follow the settings as used in \cite{HLQZ18}, i.e., $\delta=0.5,\gamma=0.8, \sigma=0.2$, and $\bar t = 2/(5\gamma)$. In the following, we will show the efficiency of NPA  for finding a nonnegative solution to \eqref{mle} through experiments with synthetic data.

\begin{example}\label{exam2}
This example is a modified version of Example \ref{exam1} in Introduction. Here, we use the same tensor ${\mathcal A}$  described in Example \ref{exam1}. Clearly, (i). when we take the right-hand side vector $b$ as $b=(0,\zeta^3)^\top$, where $\zeta$ is a nonnegative number, the resulting multilinear system has two solutions $(0,\zeta)^\top$ and $(2\zeta,\zeta)^\top$; (ii). when we set the right-hand side vector $b$ as $b=(\zeta^3,0)^\top$, it then has only one solution $(\zeta,0)^\top$ to the multilinear system. In our experiments, we consider the aforementioned two scenarios and take $\zeta=2$.
\end{example}

As shown in Example \ref{exam2}, the multilinear system with a nonnegative but not positive right-hand
side vector $b$ has an analytic nonnegative solution with zero components. We will use this example to show that our NPA can successfully find a nonnegative but not positive solution, while HM, QCA and NLSQ may obtain a fully positive solution. Since this problem is extremely simple, we solve the original system without scaling technique and use the similar stopping criterion defined in \eqref{ne5} with ${\rm Tol}=10^{-10}$ for all methods.
For the both scenarios, i.e., $b=(0,2^3)^\top$ and $b=(2^3,0)^\top$, we test three different initial points $x^0$ for the three methods, respectively. All results are listed in Tables \ref{tab3} and \ref{tab4},
where $`-$'  means that a method fails to find a solution because either the subproblem approaches to a singular linear system subproblem or the number of iterations exceeds the preset maximum iteration $2000$, `\textsf{iter}' denotes the number of iterations,
`\textsf{time}' represents the computing time in seconds and `\textsf{solution}' corresponds to an approximate solution obtained by a method.

\begin{table}[!htbp]
\caption{Numerical results for Example \ref{exam2}: (i). $b=(0,2^3)^\top$.}\label{tab3}
{
\def\temptablewidth{1\textwidth}
\begin{tabular*}{\temptablewidth}{@{\extracolsep{\fill}}llllll}\toprule
& $x^0=(0,20)^\top$&& $x^0=(20,0.001)^\top$ && $x^0=(20,20)^\top$\\
\cline{2-2} \cline{4-4}\cline{6-6}
Alg. & \textsf{iter} / \textsf{time} / \textsf{solution} && \textsf{iter} / \textsf{time} / \textsf{solution} && \textsf{iter} / \textsf{time} / \textsf{solution} \\\midrule
HM &   -- / --  / -- & &  27 / 0.41  / $(4.0,2.0)^\top$ & &  13 / 0.16  / $(4.0,2.0)^\top$ \\
QCA&   -- / --  / -- &&  51 / 0.47  / $(4.0,2.0)^\top$ &&  42 / 0.38  / $(4.0,2.0)^\top$ \\
NPA&  40 / 0.64  / $(0.0,2.0)^\top$ &&   1 / 0.31  / $(0.0,2.0)^\top$ &&   1 / 0.27  / $(0.0,2.0)^\top$ \\
NLSQ&  10 / 0.56  / $(0.0,2.0)^\top$ &&   9 / 0.11  / $(4.0,2.0)^\top$ &&  17 / 0.27  / $(0.0,2.0)^\top$ \\
 \bottomrule
\end{tabular*}}
\end{table}

\begin{table}[!htbp]
\caption{Numerical results for Example \ref{exam2}: (ii). $b=(2^3,0)^\top$.}\label{tab4}
{\scriptsize
\def\temptablewidth{1\textwidth}
\begin{tabular*}{\temptablewidth}{@{\extracolsep{\fill}}llllll}\toprule
& $x^0=(0.001,20)^\top$&& $x^0=(20,0)^\top$ && $x^0=(20,20)^\top$\\
\cline{2-2} \cline{4-4}\cline{6-6}
Alg. & \textsf{iter} / \textsf{time} / \textsf{solution} && \textsf{iter} / \textsf{time} / \textsf{solution} && \textsf{iter} / \textsf{time} / \textsf{solution} \\\midrule
HM &  13 / 0.36  / $(2.000,0.000)^\top$ & &   -- / --  / -- & &  13 / 0.23  / $(2.0,0.0)^\top$ \\
QCA&  -- / --  / -- &&   -- / --  / -- && -- / --  / -- \\
NPA&  30 / 0.80  / $(2.000,0.000)^\top$ &&  40 / 0.64  / $(2.000,0.000)^\top$ &&  30 / 0.77  / $(2.0,0.0)^\top$ \\
NLSQ&  22 / 0.52  / $(2.002,0.003)^\top$ &&  19 / 0.17  / $(2.001,0.002)^\top$ &&  -- / --  / -- \\
 \bottomrule
\end{tabular*}}
\end{table}

Notice that both HM \cite{Han17} and QCA \cite{HLQZ18} took their starting points as $x^0=b^{1/(m-1)}$ in their numerical experiments. In fact, such a positive initial point can ensure that their subproblems are nonsingular in the iterative procedure. However, if we take a nonnegative but not positive initial point $x^0$, it seems from Tables \ref{tab3} and \ref{tab4} that both HM and QCA are no longer valid, while NPA produces a nonnegative solution with zero components. If we take a fully positive initial point, it can be seen from Table \ref{tab3} that HM, QCA and NLSQ may find a fully positive solution when the multilinear system has multiple nonnegative solutions including at least one fully positive solution. Interestingly, we can observe from Table \ref{tab4} that HM and NPA can successfully obtain a nonnegative solution with zeros when taking a fully positive starting point. Thus, we guess empirically that HM is also available to find the nonnegative solution by setting an appropriate positive starting point when the multilinear system has a unique nonnegative solution. However, HM may fail to find a desired nonnegative solution if the system has one more fully positive solution. Promisingly, the proposed NPA works well with different initial points for Example \ref{exam2}. Comparatively speaking, the proposed NPA seems more robust on finding nonnegative solutions when starting with different initial points.

Below, we consider some higher order sparse nonsingular ${\mathcal M}$-tensors, which are generated randomly as follows. We first randomly generate a sparse nonnegative tensor ${\mathcal B}\in{\mathbb T}_{m,n}$, whose $80\%$ components are zeros and others are uniformly distributed in $(0,1)$. Then, by setting $\omega=0.1$ in
 \begin{equation*}\label{ne1}
s=(1+\omega)\cdot \max_{i\in[n]}\left(\sum^n_{i_2,\cdots,i_m=1}b_{ii_2\cdots i_m}\right)
\end{equation*}
and letting ${\cal A}:=s{\cal I}-{\cal B}$, we can see that ${\cal A}$ is a nonsingular ${\cal M}$-tensor since
\begin{equation*}\label{ne2}
\rho({\cal B})\leq \max_{i\in[n]}\left(\sum^n_{i_2,\cdots,i_m=1}b_{ii_2\cdots i_m}\right),
\end{equation*}
and $s>\rho({\cal B})$. For the vector $b$, we first generate a sparse vector $x^*=\verb"sprand"(n,1,0.4)$ by the {\sc Matlab} script `\verb"sprand"', where $0.4$ controls the number of zero components, and then let $b={\mathcal A}(x^*)^{m-1}\in{\mathbb R}^n_+$. Therefore, we can always ensure that the resulting multilinear system has at least one nonnegative but not positive solution.

In this test, we use $x^0=(1,1,\cdots,1)^\top$ and ${\rm Tol}=10^{-10}$ to be the starting points and tolerance for all methods, respectively. Here, we only plot the convergence curve of NPA on this example in Fig. \ref{fig1} to further support our conjecture (i.e., linear convergence rate behaviour of NPA). Moreover, we compare the approximate solutions obtained by all methods with the known true solution of the multilinear system in Fig. \ref{fig2}.

\begin{figure}[!htbp]
\includegraphics[width=0.95\textwidth]{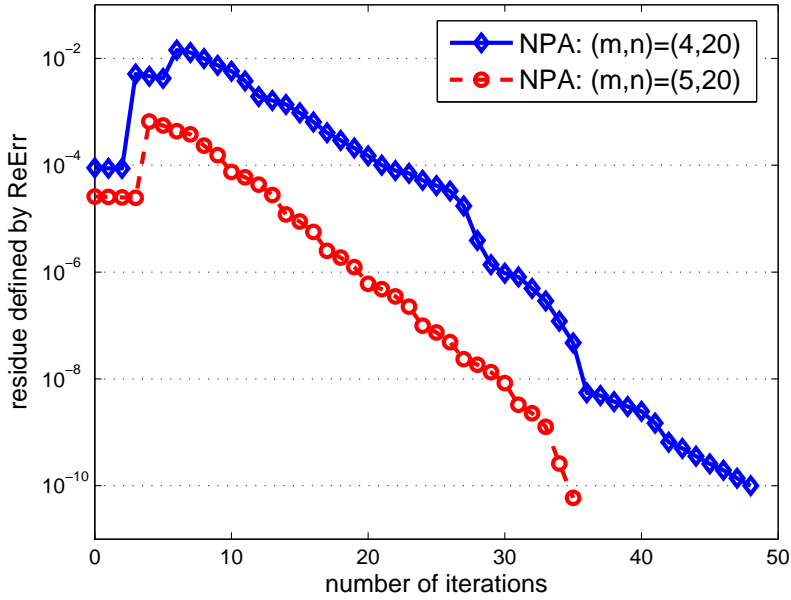}
\caption{Evolutions of the residue defined by \textsf{ReErr} in \eqref{ne5} with respect to the number of iterations. The convergence curve implies that NPA may be linearly convergent.}
\label{fig1}
\end{figure}

\begin{figure}[!htbp]
\includegraphics[width=0.49\textwidth]{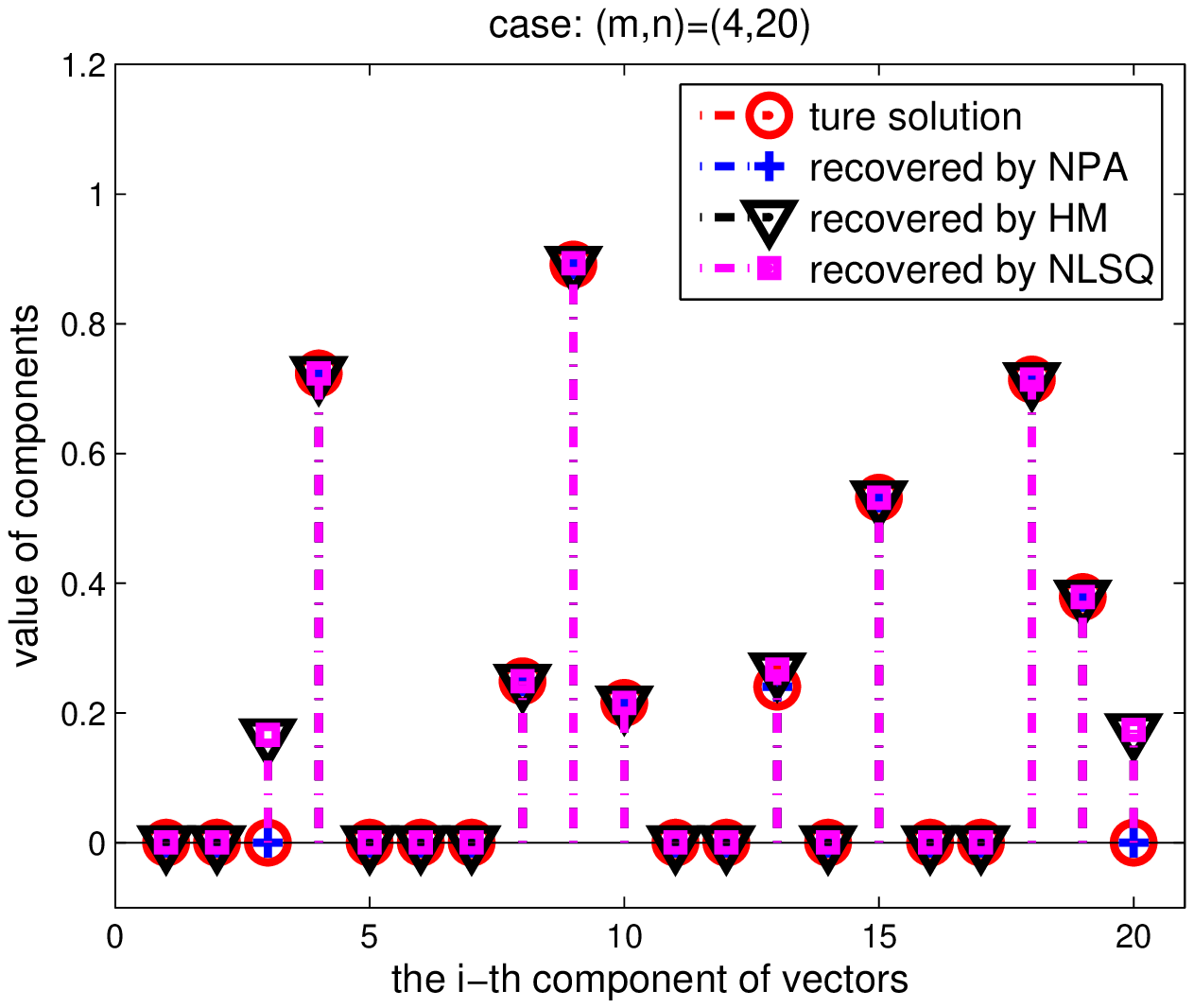}
\includegraphics[width=0.49\textwidth]{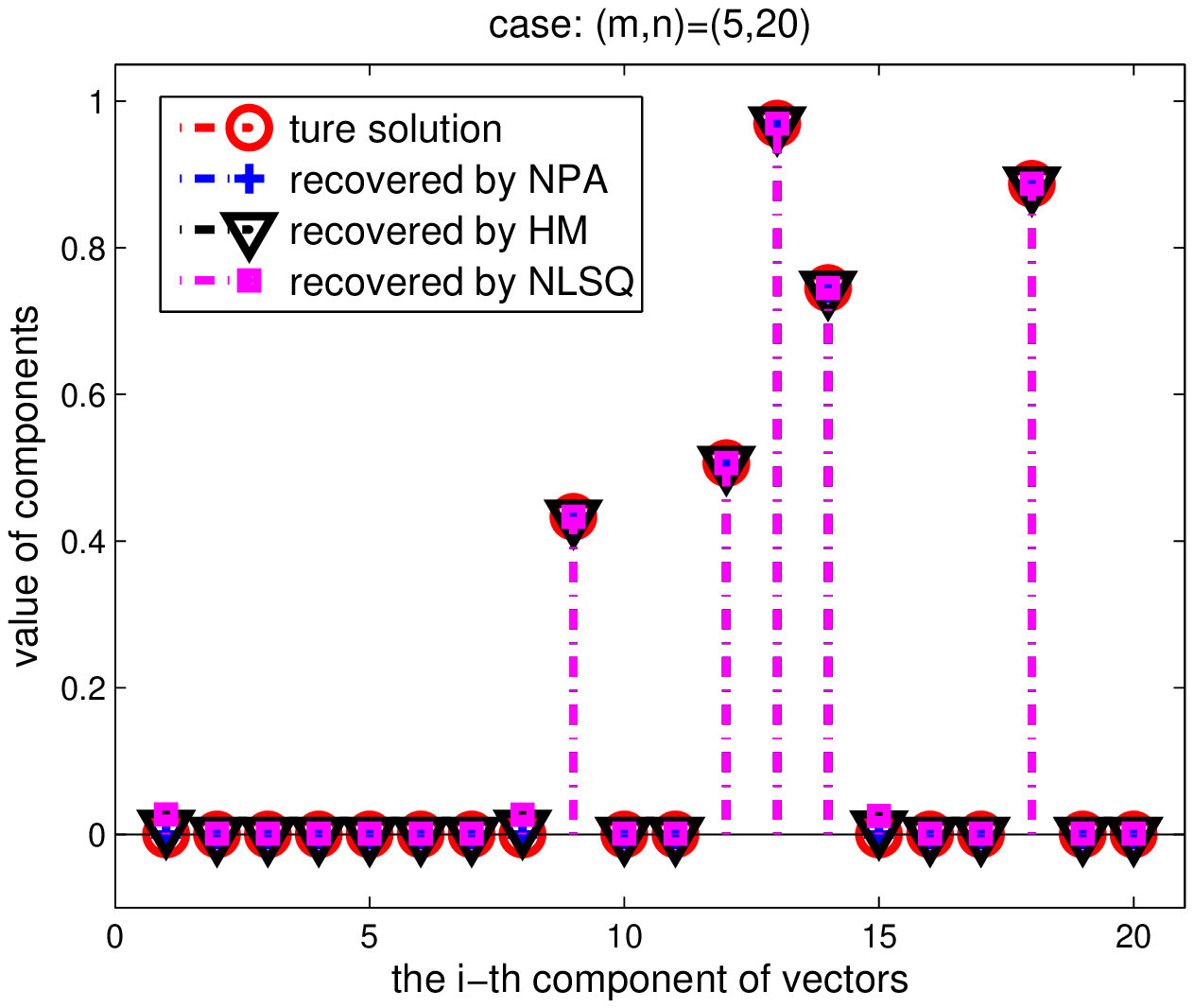}
\caption{Comparison on the solutions recovered by the methods.}
\label{fig2}
\end{figure}
It can be easily seen from Figs. \ref{fig1} and \ref{fig2} that NPA  can return a sparse nonnegative solution to the multilinear system. The convergence curves in Fig. \ref{fig1} show that NPA  seems to be linearly convergent. Moreover, Fig. \ref{fig2} show that NPA can perfectly find a nonnegative sparse solution to the multilinear system under test. For the case $(m,n)=(4,20)$, we can see that HM and NLSQ return a relatively lower quality solution. Hence, the promising nonnegativity preserving property of NPA might be helpful to algorithmic design for sparse nonnegative tensor equations studied in \cite{LN15,LQX15}, which is also one of our future research topics.

Finally, we further test a $3$-rd order higher dimension case with nonnegative $b$, where we generate sparse data in a similar way used in Figs. \ref{fig1} and \ref{fig2}, i.e., tensor ${\mathcal B}\in{\mathbb T}_{m,n}$ is sparse with $60\%$ zeros and the nonnegative $b$ is generated by {\sc Matlab} script `$b$=\verb"sprand"$(n,1,0.4)$'. As shown in Table \ref{tab4}, QCA is not valid  for the case where $b$ is nonnegative with zeros.  Therefore, we only compare HM, NPA, and NLSQ. Since the randomness of ${\mathcal B}$ and $b$, we randomly generate $100$ groups of the data and report the numerical performance of HM, NPA, and NLSQ in Fig. \ref{fig3}. In our experiments, we take $x^0=(10,\cdots,10)^\top$ and ${\rm Tol}=10^{-8}$ and the maximum iteration being $500$ for all methods. For the rate reported in Fig. \ref{fig3}, it can be regarded that the problem is successfully solved if the residue defined by \eqref{ne5} is less than $10^{-5}$; otherwise, it can be regarded as failure. Moreover, we regard a component of an approximate solution as zero if the value of the component is less than $10^{-5}$; otherwise, it is a nonzero (positive) component.

\begin{figure}[!htbp]
\includegraphics[width=0.49\textwidth]{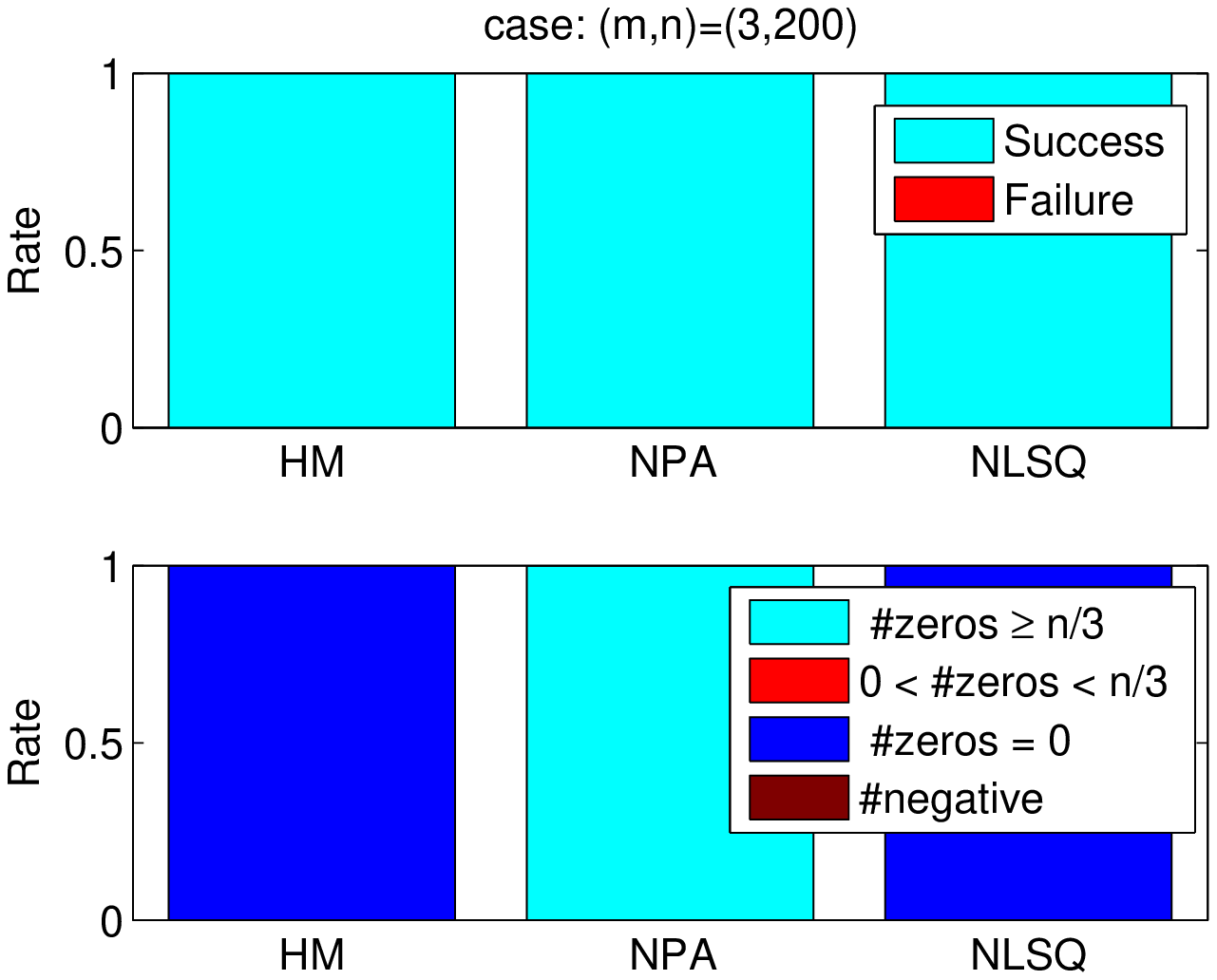}
\includegraphics[width=0.49\textwidth]{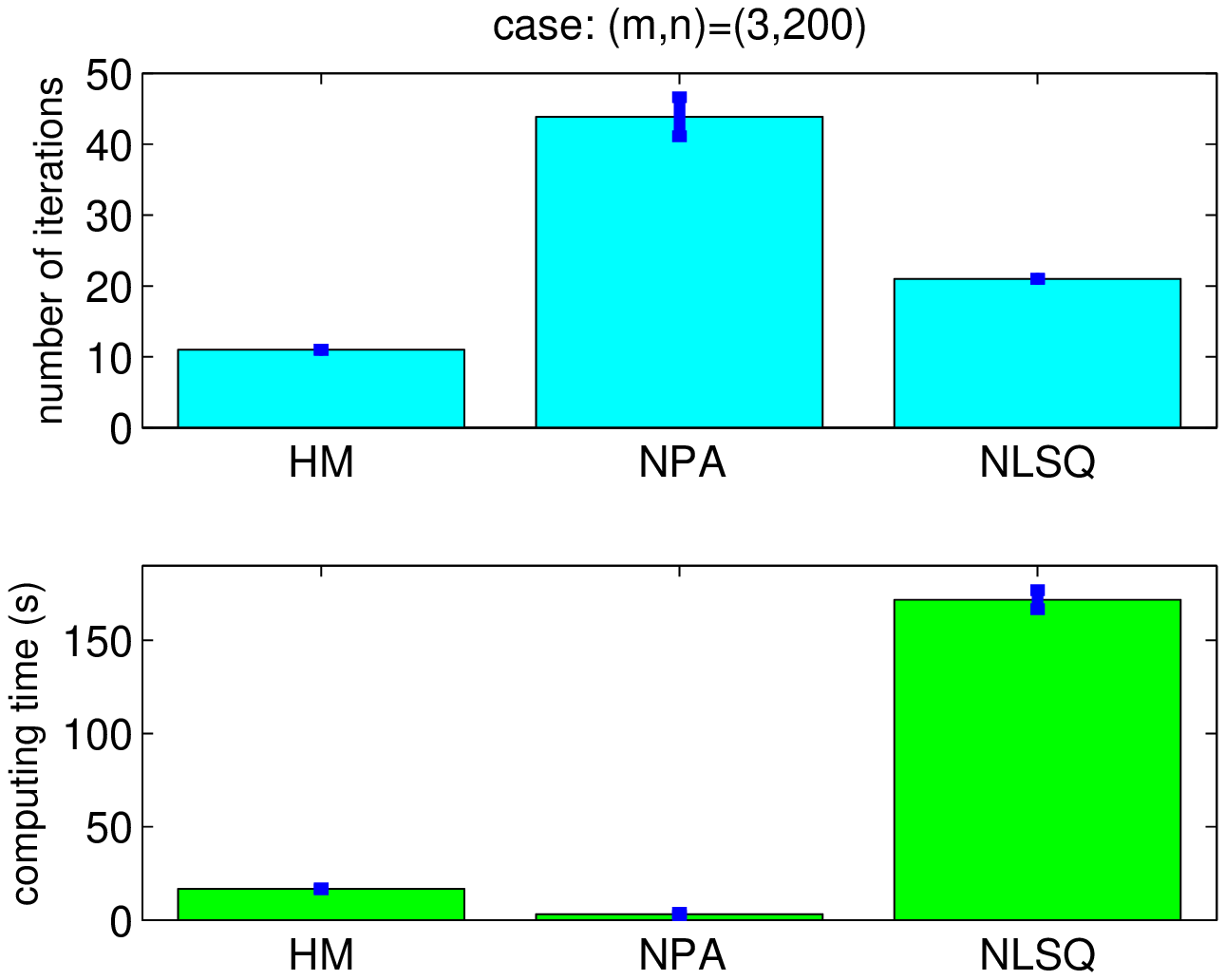}
\caption{Comparison for general cases of \eqref{mle} ($(m,n)=(3,200)$) with nonnegative $b$. (i) The top left subplot corresponds to success rate and failure rate, and the bottom left subplot corresponds to the rate of different types (nonnegative sparse [\#zeros$\geq \frac{n}{3}$], nonnegative with zeros [$0< $\#zeros$<\frac{n}{3}$], fully positive [\#zeros$=0$], and negative) of solutions; (ii) The right two subplots are the standard error bar on iterations and computing time.}
\label{fig3}
\end{figure}

It is clear from Fig. \ref{fig3} that HM, NPA, and NLSQ can successfully find a solution to \eqref{mle} with a nonnegative vector $b$. However, both HM and NLSQ only obtain positive solutions to the problem. In this case, if ones were concerned about sparse solutions to \eqref{mle},  results here show that NPA seems the most reliable solver. Moreover, the right two standard error bars in Fig. \ref{fig3} show that NPA takes less computing time to find a (sparse) solution than both HM and NLSQ, even though NPA requires more iterations. From this point, we think that the proposed NPA is efficient for the problem under consideration.

According to the results reported in this section, we can draw the conclusion that, compared to HM \cite{Han17}, QCA \cite{HLQZ18}, and NLSQ (`\verb"lsqnonlin"'), the proposed NPA (Algorithm \ref{alg31}) has its own advantages, i.e., it can be applied to a wider range of cases. In particular, when the multilinear system has multiple nonnegative solutions, and if our purpose is to get a solution as sparse as possible, the proposed algorithm may be a better candidate solver to achieve this goal.

\section{Conclusion}\label{s5}
In this paper, we mainly studied the multilinear system in the form of \eqref{mle}. We showed that the multilinear system, whose coefficient tensor is a nonsingular ${\mathcal M}$-tensor and right-hand side vector is nonnegative, always has a nonnegative solution, but the solution may not be unique. Aiming at this case, we proposed a Newton-type algorithm that can perfectly preserve the nonnegativity of the iterative sequence. Moreover, we show that a nonnegative decreasing sequence generated by our proposed algorithm converges to a nonnegative solution of the system under consideration. By numerical experiments, we stated that our method is efficient and it has advantages over some existing algorithms: when the right-hand side is nonnegative but not positive, our proposed algorithm can still output a nonnegative solution of the system, while the others may not produce a nonnegative solution. In the future, we will try to analyze the convergence rate of the proposed algorithm and apply it to real-life sparse problems.

\medskip
\begin{acknowledgements}
The authors are grateful to the editors, the two anonymous referees and Professor Donghui Li for their valuable comments which led to great improvements of the paper. H. He and C. Ling were supported in part by National Natural Science Foundation of China (Nos. 11771113 and 11571087) and Natural Science Foundation of Zhejiang Province (Nos. LY19A010019 and LD19A010002).
\end{acknowledgements}


\end{document}